\def\A{\mathbb{A}}
\def\B{\mathbb{B}}
\def\Q{\mathbb{Q}}
\def\R{\mathbb{R}}
\def\C{\mathbb{C}}
\def\F{\mathbb{F}}
\def\P{\mathbb{P}}
\let\myacute=\'
\def\<{\langle}
\def\>{\rangle}
\def\N{\mathbb{N}}
\def\Z{\mathbb{Z}}
\def\cL{\mathcal{L}}
\def \begindm {\begin{displaymath}}
\def \enddm {\end{displaymath}}
\def\C{\mathbb{C}}
\def\R{\mathbb{R}}
\def\Q{\mathbb{Q}}
\def\F{\mathbb{F}}
\def\cL{\mathcal{L}}
\def\cM{\mathcal{M}}
\def\cO{\mathcal{O}}
\newtheorem{thm}{Theorem}[section]
\newtheorem{cor}{Corollary}[section]
\newtheorem{prop}{Proposition}[section]
\newtheorem{claim}{Claim}[section]
\newtheorem{remark}{Remark}[section]
\newtheorem{fact}{Fact}[section]
\numberwithin{equation}{section}
\long\def\symbolfootnote[#1]#2{\begingroup\def\thefootnote{\fnsymbol{footnote}}\footnote[#1]{#2}\endgroup}
\title{Model Theory of Adeles {I}}
\author[J. Derakhshan]{Jamshid Derakhshan}
\address{St Hilda's College, University of Oxford, Cowley Place, Oxford OX4 1DY, and Mathematical Institute, Oxford, OX2 6GG, UK}
\email{derakhsh@maths.ox.ac.uk}
\author[A. Macintyre]{Angus Macintyre${}^{\dag}$}
\address{Queen Mary, University of London,
School of Mathematical Sciences, Queen Mary, University of London, Mile End Road, London E1 4NS, UK}
\email{angus@eecs.qmul.ac.uk}
\thanks{${}^{\dag}$Supported by a Leverhulme Emeritus Fellowship}
\begin{document}

\keywords{}

\subjclass[2000]{}

\begin{abstract} 
We study the model theory of the ring of adeles of a number field. We obtain quantifier elimination results in 
the language of rings and some enrichments. We given consequences for definable subsets of the adeles, and their measures. 
\end{abstract}

\maketitle

\section{Introduction}\label{sec-introduction}

In this paper we study the model theory of the ring $\Bbb A_K$ 
of adeles of a number field $K$ in the language of rings and some enrichments. Our basic analysis is closely related to that 
of Weispfenning \cite{weisp-hab}, but our formalism is different. 

We consider $\Bbb A_K$ as a substructure of a 
generalized product in the sense of the classic 
paper \cite{FV} of Feferman-Vaught, but stressing that the associated Boolean 
system is living on the algebra of idempotents of $\Bbb A_K$. This enables us to internalize the generalized product structure 
on $\A_K$ within the language of rings or arithmetically significant enrichments of it, and obtain quantifier-elimination in natural languages.
We then show that the definable subsets of $\A_K^m$ are measurable, for any $m\geq 1$, and make a good start on the study of the numbers 
that can occur as normalized measures of definable subsets of the adeles.

\section{Basic notions}\label{basic}

\subsection{\it Valuations and absolute values} 

\

\

It is convenient for us to follow the well-known book by Cassels and Frohlich \cite{CF}, and in particular 
Cassels' chapter ``Global fields'' 
\cite{Cassels}, but will 
use only the number field case. 
We use ``K'' for a general number field. Note that Cassels' ``valuations'' $|.|$ are usually called absolute values. 

We are concerned only with three kinds of valuation on $K$ (up to equivalence of defining the same topology), namely:

(1) discrete non-archimedean, residue field finite of cardinality $q$,

(2) completion of $|.|$ is $\R$,

(3) completion of $|.|$ is $\Bbb C$.

In case 1, $|.|$ is normalized if $|\pi|=1/q$ where $\pi$ is a uniformizing element (i.e.\ $v(\pi)$ is minimal positive). 

In case 2, $|.|$ is the usual absolute value, and in case 3 $|.|$ is the square of the usual absolute value.

For us it will be convenient to write $v$ for the corresponding logarithmic valuation, defined by $|x|=p^{-v(x)}$, 
and $K_v$ for the completion. We write 
$\mathrm{Arch}(K)$ for the set of archimedean normalized valuations (a finite set), and 
$V_K$ for the set of all normalized valuations on $K$. We put $V_K^f=V_K \setminus \mathrm{Arch(K)}$, the set of 
non-archimedean valuations (or finite places).

We shall denote by $\Gamma$ the value group of a valued field and by $\cO_K:=\{x: |x|_v\leq 1\}$ and $\cM_K:=\{x: |x|_v\le 1\}$ the 
valuation ring of $K$ and its maximal 
ideal respectively. For a completion $K_v$ of $K$, where $v\notin \mathrm{Arch}(K)$, we shall denote $\cO_{K_v}$ by $\mathcal{O}_v$, and 
the residue field by $k_v$.

\subsection{\it Restricted direct products and measures}

\
 
\

Let $\Lambda$ denote an index set, and $\Lambda_{\infty}$ a fixed finite 
subset of $\Lambda$. Suppose that we are given, for each $\lambda\in \Lambda$, a locally compact space  
$G_{\lambda}$ and for all $\lambda\notin \Lambda$, 
a fixed compact and open subset $H_{\lambda}$ of $G_{\lambda}$. Then the 
restricted direct product of $G_{\lambda}$ with respect to $H_{\lambda}$ is defined to be the set of all elements 
$(x(\lambda))_{\lambda}\in \prod_{\lambda \in \Lambda} G_{\lambda}$ such that $x(\lambda)\in H_{\lambda}$ for almost 
all $\lambda$, and denoted $\prod'_{\lambda\in \Lambda} G_{\lambda}$.

Note that our notation $^{'}$ does not mention the $H_{\lambda}$ but of course the definition depends on $H_{\lambda}$. 
This restricted product carries the topology with a basis of open sets the sets 
$\prod_{\lambda} \Gamma_{\lambda}$, where $\Gamma_{\lambda}\subseteq G_{\lambda}$ is open for all $\lambda$, and 
$\Gamma_{\lambda}=H_{\lambda}$ for almost all $\lambda$. For a finite subset $S\subset \Lambda$ containing $\Lambda_{\infty}$, 
$G_S:=\prod_{\lambda\in S} G_{\lambda}\times \prod_{\lambda\notin S} H_{\lambda}$ is locally compact and open in $G$, and 
$G$ is the union of the $G_S$ over all finite subsets $S$ of $\Lambda$ containing $\Lambda_{\infty}$.

Let $\mu_{\lambda}$ denote a Haar measure on $G_{\lambda}$ such that $\mu_{\lambda}(H_{\lambda})=1$ 
for almost all $\lambda\notin \Lambda_{\infty}$. We define the product measure $\mu$ on 
$G$ to be the measure for which a basis of measurable sets consists of the sets 
$\prod_{\lambda} M_{\lambda}$, where $M_{\lambda}\subset G_{\lambda}$ is $\mu_{\lambda}$-measurable and 
$M_{\lambda}=H_{\lambda}$ for almost all $\lambda$, and $\mu(\prod_{\lambda} M_{\lambda})=\prod_{\lambda} 
\mu_{\lambda}(M_{\lambda})$. \

\subsection{\it The ring of adeles $\Bbb A_K$.} 

\

\

The adele ring $\Bbb A_K$ over a number field $K$ 
is the topological ring whose underlying topological space is the restricted direct product 
of the additive groups $K_v$ ($v$ normalized) with respect to the subgroups $\mathcal{O}_v$ with 
addition and multiplication defined componentwise. 
The restricted direct product 
$$\A_{K}^{fin}=\prod'_{v\notin \mathrm{Arch}(K)} K_v$$ 
is called the ring of finite adeles. One has 
$$\Bbb A_K=\prod_{v\in \mathrm{Arch}}K_v \times \Bbb A_{K}^{fin}.$$ 
A typical adele $a$ will be written as $(a(v))_{v}$. 

There is an embedding of $K$ into $\Bbb A_K$ sending $a\in K$ to the 
constant sequence $(a,a,\cdots)$. The image is called the ring of principal 
adeles, which we can identify with $K$. It is a discrete subspace of $\Bbb A_K$ with compact quotient $\A_K/K$. Note that if 
$K\subseteq L$ are global fields, then 
$$\A_L=\A_K \otimes_{K} L.$$

Each $K_v$ is a locally compact field, and carries a Haar measure $\mu_v$. 
These yield measures on $\A_K$ and $\A_K^{fin}$. 
%We will work with the measure on $\Bbb A_K$ which is the product of the 
%measures $\mu_v$ and normalized such that 
%$\mu_K(\prod_{v} \mathcal{O}_v)=1$. Let $|.|_v$ run through all the normalized valuations of $K$. 
%Given any $a\in K$, $|a|_v=1$ for all except finitely many $v$ and one 
%has the product formula $\prod_v |a|=1$ (cf.~\cite[pp.60]{CF}).

The idele group $\Bbb I_K$ is the multiplicative group of units of $\Bbb A_K$.
It coincides with the restricted direct product of the multiplicative groups $K_v^*$ with respect to 
the groups of units $U_v$ of $\mathcal{O}_v$. 
 
It is most natural to give it the restricted direct product topology, 
which does not coincide with the subspace topology (cf.~\cite{CF}). The multiplicative group 
$K_v^*$ carries a Haar 
measure defined by the integral $\int f(x) |x|^{-1}dx$, for every Borel function $f(x):K_v\rightarrow \Bbb C$, 
where $dx$ is an additive Haar measure on $K_v$. This yields a measure on the ideles 
$\Bbb I_K$.

Certain normalizations of the above measures for $\A_K$ and $\Bbb I_K$ have been of importance in number theory, in 
Tate's thesis in \cite{CF}. We shall show that Tate's normalization factors are volumes of sets that are definable $K_v$ uniformly in $v$, in some enrichment of the ring language (cf. Section \ref{sec-def-sets}.

\subsection{\it Idempotents and support}\label{sec-id} 

\

\

Let $\B_K$ denote the set of idempotents in $\A_K$, namely, 
all elements $a\in \A_K$ such that $a^2=a$. $\B_K$ is a Boolean algebra with the Boolean 
operations defined by 
$$e\wedge f=ef,$$ 
$$e\vee f=1-(1-e)(1-f)=e+f-ef,$$ 
and 
$$\bar e=1-e.$$ 
Clearly, $\B_K$ is a definable subset of $\Bbb A_K$ in the language of rings. 

Note that there is a correspondence between subsets of $V_K$ and 
idempotents $e$ in $\Bbb A_K$ given by 

$$X \longrightarrow e_X,$$ where 
$e_X(v)=1$ if $v\in X$, and $e_X=0$ if $v\notin X$. Clearly $e_X\in \Bbb A_K$. Conversely, 
if $e\in \Bbb A_K$ is idempotent, let $X=\{v: e(v)=1\}$. Then $e=e_{X}$. 

Note that minimal idempotents $e$ correspond to normalized valuations $v_e$, and vice-versa, 
$v$ corresponds to $e_{\{v\}}$ above. Note that 

$$\Bbb A_K/(1-e)\Bbb A_K \cong e\Bbb A_K\cong K_{v_e}.$$

Let $\cL_{\rm{rings}}$ denote the language of rings $\{+,.,0,1)\}$. For any $\cL_{\rm{rings}}$-formula 
$\Phi(x_1,\dots,x_n)$ define $Loc(\Phi)$ as the set of all 
$$(e,a_1,\dots,a_n)\in\Bbb A_K^{n+1}$$
such that $e$ is a minimal idempotent and 
$$e\Bbb A_K \models \Phi(ea_1,\dots,ea_n).$$

Note that $e\Bbb A_K$ is 
a subring of $\Bbb A_K$ with $e$ as its unit, is definable with the parameter $e$, and 
$$e\A_K\models \Phi(ea_1,\dots,ea_n)$$
if and only if
$$\A_K\models \Phi(ea_1,\dots,ea_n).$$
Clearly $Loc(\Phi)$ is a definable subset of $\Bbb A_K^{n+1}$. 

For $a_1,\dots,a_n\in \Bbb A(K)$, define 
$$[[\Phi(a_1,\dots,a_n)]]$$ as the supremum of all the minimal idempotents $e$ in $\Bbb B_K$ such that 
$$(e,a_1,\dots,a_n)\in Loc(\Phi).$$
If the set of such minimal idempotents is empty, then 
$$[[\Phi(a_1,\dots,a_n)]]=0$$
We think of this as a Boolean value of $\Phi(a_1,\dots,a_n)$. For fixed $\Phi(x_1,\dots,x_n)$, the function 
$\Bbb A_K^n\rightarrow \Bbb A_K$ given by 
$$(a_1,\dots,a_n)\rightarrow [[\Phi(a_1,\dots,a_n)]]$$ 
is $\cL_{\rm{rings}}$-definable uniformly in $K$. 
The support of $a\in \Bbb A_K$, denoted $\mathrm{supp}(a)$, is defined as 
the Boolean value $[[\Phi(a)]]$ where $\Phi(x)$ is $x\neq 0$.

We let $\B_{K}^f$ denote the Boolean algebra of idempotents in $\A_{K}^{fin}$. Given 
$a_1,\cdots,a_n\in \A_{K}^{fin}$ and $\cL_{\rm{rings}}$-formula $\Phi(x_1,\cdots,x_n)$, the Boolean 
value 
$$[[\Phi(x_1,\cdots,x_n)]]^{fin}$$ is the supremum of all the 
minimal idempotents $e$ such that 
$$e\A_{K}^{fin} \models \Phi(ea_1,\dots,ea_n).$$ 
Note that this is $\cL_{\rm{rings}}$-definable in $\A_K$, and the map 
$$(\A_{K}^{fin})^n \rightarrow \A_{K}^{fin}$$
given by 
$$(x_1,\cdots,x_n) \rightarrow [[\Phi(x_1,\dots,x_n)]]$$ 
is $\cL_{\rm{rings}}$-definable uniformly 
in $K$. 

Let $\Psi^{\mathrm{Arch}}$ be a sentence that holds in $\R$ and $\C$ but does not hold in a 
non-archimedean local field (for example $\forall x \exists y (x=y^2 \vee -x=y^2)$). We call 
a minimal idempotent $e$ archimedean if $e\A_K\models \Psi^{\mathrm{Arch}}$, and 
non-archimedean otherwise. Let $e_{\infty}$ denote the supremum of all the archimedean minimal idempotents. 
$e_{\infty}$ is supported precisely on the set $\mathrm{Arch}(K)$, and 
$1-e_{\infty}$ precisely on the set of non-archimedean valuations. 

Let $\Psi_{\R}$ be a sentence that holds precisely in the $K_v$ which are isomorphic to $\R$ 
(e.g. $\Psi^{\mathrm{Arch}}\wedge \neg \exists y (y^2=-1)$), and 
$\Psi_{\C}$ a sentence that holds precisely in the $K_v$ which are isomorphic to $\C$ 
(e.g. $\Psi^{\mathrm{Arch}}\wedge \exists y (y^2=-1)$). We call a minimal idempotent real if 
$e\A_K\models \Psi_{\R}$, and complex if $e\A_K\models \Psi_{\C}$ . Let 
$e_{\R} $(resp.\ $e_{\C}$) denote the supremum of all the real (resp.~complex) minimal idempotents. 
Then $e_{\R}$ (resp. $e_{\C})$ are supported precisely on the set of 
$v$ such that $K_v$ is real  (resp. complex).

For $a_1,\cdots,a_n\in \Bbb A_k$ and $\cL_{\rm{rings}}$-formula $\Phi(x_1,\dots,x_n)$, we denote by  
$$[[\Phi(a_1,\cdots,a_n)]]^{real}$$
the supremum of all the minimal idempotents $e$ 
such that 
$$e\Bbb A_K\models \Psi_{\R} \wedge \Phi(ea_1,\cdots,ea_n),$$
and by
$$[[\Phi(a_1,\cdots,a_n)]]^{complex}$$
the supremum of all the minimal idempotents $e$ 
such that 
$$e\Bbb A_K\models \Psi_{\C} \wedge \Phi(ea_1,\cdots,ea_n).$$
We denote by 
$$[[\Phi(a_1,\cdots,a_n)]]^{na}$$
the supremum of all the minimal idempotents $e$ such that 
$$e\Bbb A_K\models \neg \Psi^{\mathrm{Arch}} \wedge \Phi(ea_1,\cdots,ea_n).$$ 
The functions $\A_K^n \rightarrow \A_K$ given by 
$$(a_1,\dots,a_n)\rightarrow [[\Phi(a_1,\cdots,a_n)]]^{real},$$
$$(a_1,\dots,a_n)\rightarrow [[\Phi(a_1,\cdots,a_n)]]^{complex},$$ and 
$$(a_1,\dots,a_n)\rightarrow [[\Phi(a_1,\cdots,a_n)]]^{na},$$ 
are all $\cL_{\rm{rings}}$-definable uniformly in $K$.

Note that one can not in general seperate the $K_v$ using a sentence of $\cL_{\rm{rings}}$ by their residue characteristic 
when the number field is not 
$\Q$ (the case of $\Q_p$ can be done using the uniform definition of the valuation rings $\Z_p$ from the language of rings by the results in \cite{CDLM} - note that this uniform definition holds more generally for all finite extensions of all $\Q_p$),
but one can do this for the class of $K_v$ which have 
residue characteristic $p$ using any $\cL_{\rm{rings}}$-definition of the valuation rings of $K_v$ (for example the one in \cite{CDLM}).

\section{Finite support idempotents}\label{fin-supp}

Among the idempotents $e$ there are the especially important ones:

i) $e$ with $\mathrm{supp}(e)$ of size $0,1,2\dots$

ii) $e$ of finite support.

We denote by $Fin_K$ the set of idempotents in $\A_K$ with finite support. It is an ideal in the Boolean algebra $\Bbb B_K$. When working with the finite adeles, $\A_K^{fin}$, we will use the same 
notation for the ideal of finite support idempotents in $\A_K^{fin}$. 

It had been known for some time that there is a uniform  $\cL_{\rm{rings}}$-definition of the valuation ring for all 
completions of number fields. See, for example, \cite{koenigsmann}.
However, only recently, and motivated by the objectives of the present paper, has precise information been obtained 
on the complexity of such a definition, in \cite{CDLM}, where the following theorem is proved.

\begin{thm}\cite[Theorem 2]{CDLM}\label{CDLM-th} There is an existential-universal formula in the language of rings that uniformly defines the valuation ring of all Henselian valued fields with finite or pseudo-finite residue field.
\end{thm}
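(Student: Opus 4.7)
The plan is to exhibit an explicit $\exists\forall$-formula $\varphi(x)$ in $\cL_{\rm{rings}}$ and to verify that $\varphi(K)=\mathcal{O}_v$ for every Henselian valued field $(K,v)$ whose residue field is finite or pseudo-finite. I would organise the argument into three layers: a residue-field step, a Henselian lifting step, and a quantifier-complexity step.

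First, I would extract from the theory of finite and pseudo-finite fields a first-order scheme pinpointing a distinguished ``large'' definable subset of the residue field---for instance, the set of norms from a separable extension of prescribed degree, whose complement is controllable by existential statements. The pseudo-finite axioms (one irreducible polynomial of each degree, perfectness, absolute Galois group $\widehat{\Z}$) guarantee that the scheme is available uniformly across all residue characteristics and cardinalities. Next I would lift this residue-field data to $K$ using Hensel's Lemma: integrality of $x$ would correspond to the solvability in $K$ of a polynomial $F(x,\vec y,\,\cdot\,)$ whose residue has a simple root iff $x\in\mathcal{O}_v$, while non-integrality (so $1/x$ lies in the maximal ideal) would force a specific Hensel-detectable pathology. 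The candidate formula has the shape
\[\varphi(x) \equiv \exists y_1\cdots\exists y_k \, \forall z \, \Theta(x,y_1,\ldots,y_k,z),\]
where $\Theta$ is a quantifier-free polynomial condition wrapping both Hensel approximation witnesses for $x$ and the non-triviality of the valuation.

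Verification then splits into two directions. For $x\in\mathcal{O}_v$, I would exhibit the $\vec y$ as suitable Hensel lifts of residues and check that $\Theta$ holds for every $z$ by a case split on $v(z)$. For $x\notin\mathcal{O}_v$, for any given $\vec y$ I would produce a refuting $z$ from the observation that multiplication by $x$ (of negative value) spreads cosets of $\mathcal{O}_v$ unboundedly in the value group, so that the norm/Hensel clause in $\Theta$ is forced to fail for an appropriately calibrated $z$.

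The hardest part is genuine uniformity: the formula must not mention the residue characteristic, the cardinality of a finite residue field, or the ramification index, which rules out any explicit $p$, $q$, or $q-1$ appearing inside $\Theta$. The cure is to encode ``largeness in the residue field'' by a characteristic-free scheme---degree of the auxiliary extension absorbed into $\vec y$, or a trace/quadratic-form condition supplemented by a uniform separate clause for characteristic $2$---and to run the same $\Theta$ throughout. A further subtlety is preventing the quantifier prefix from opening up to $\exists\forall\exists$: each Henselian-solvability step naturally provides an existential witness, and these must be merged into the single outer $\exists$-block, which is possible precisely because Hensel's Lemma already promises an existential solution from sufficiently fine approximations collected in $\vec y$.
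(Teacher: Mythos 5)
The paper does not prove this statement: it is quoted verbatim as Theorem~2 of \cite{CDLM} and used as a black box in the proof of Theorem~\ref{fin-def}. There is therefore no ``paper's own proof'' to compare your sketch against; the substance of your proposal has to be judged against the CDLM source.

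Measured on those terms, your sketch is the right general \emph{shape} (existential witnesses produced by Hensel's lemma, a universal block for the rigidity/non-triviality side), but it has a genuine gap that would sink the argument as written. You propose to encode ``largeness in the residue field'' via norms from a separable extension of prescribed degree, and to handle uniformity by letting the ``degree of the auxiliary extension be absorbed into $\vec y$.'' This cannot work inside a single first-order formula: a fixed $\exists\forall$-formula has a fixed number of variables and fixed polynomial degrees, so one cannot quantify over the degree of the extension or the number of norm coordinates. But across Henselian fields with residue fields of all characteristics and all residue degrees, no single fixed-degree norm scheme is ``large'' in every residue field, so the residue-field step collapses precisely at the point where uniformity is supposed to be recovered. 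The actual CDLM construction avoids this by working with one explicit polynomial family of bounded degree, chosen so that an explicit Lang--Weil/pseudo-finiteness argument shows the relevant image is large in \emph{every} finite and pseudo-finite field simultaneously (with a separate uniform clause to absorb the small exceptional characteristics); this explicitness is exactly what your sketch postpones with ``I would extract \ldots a scheme'' and never supplies. Likewise, the claim that for $x\notin\mathcal{O}_v$ one can always ``produce a refuting $z$'' needs to be checked against the specific $\Theta$, and the assertion that the Hensel witnesses ``must be merged into the single outer $\exists$-block'' is true only because the particular polynomial in CDLM has a single Henselian solvability clause rather than nested ones --- it is a feature of the explicit formula, not an automatic consequence of Hensel's lemma. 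In short: the architecture you describe matches the CDLM strategy in outline, but the crucial uniformity mechanism you propose is not first-order expressible, and the sketch contains no candidate $\Theta$ against which the two verification directions could actually be run.
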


Here we will be interested in applying this to the valuation rings of all the completions of a given number field (or of all number fields). However, the result is far more general 
as it applies to all Henselian valued fields which have higher rank valuations and arbitrary value groups and their ultraproducts.  

Let us denote by $\Phi_{val}(x)$ the formula from Theorem \ref{CDLM-th}.

\begin{thm}\label{fin-def} The ideal $\F_K$ is definable in 
$\Bbb A_K$ by an 
$\exists\forall\exists$-formula of the language of rings uniformly in $K$. The same is true for the ideal of finite support idempotents in $\A_K^{fin}$.
\end{thm}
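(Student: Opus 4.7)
The plan is to characterize finite-support idempotents via von Neumann regularity of the corresponding subring $e\A_K$.

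\emph{Reduction.} For an idempotent $e\in \B_K$ I claim that $e\in \F_K$ if and only if $e\A_K$ (viewed as a ring with unit $e$) is von Neumann regular, i.e.\ for every $a\in e\A_K$ there exists $b$ with $a = a^2 b$. Granting this, $\F_K$ is defined in $\A_K$ by the $\cL_{\mathrm{rings}}$-formula
$$\phi(x)\;\equiv\;(x^2=x)\;\wedge\;\forall a\,\exists b\,\bigl(xa=(xa)^2 b\bigr),$$
uniformly in $K$. This formula is $\forall\exists$, hence certainly within the claimed $\exists\forall\exists$-bound (one may prepend a vacuous existential quantifier).

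\emph{Forward direction.} If $\mathrm{supp}(e)=\{v_1,\dots,v_n\}$ is finite, then $e\A_K\cong\prod_{i=1}^{n}K_{v_i}$, a finite direct product of fields, which is plainly von Neumann regular.

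\emph{Converse.} Contrapositively, suppose $\mathrm{supp}(e)$ is infinite. Since $\mathrm{Arch}(K)$ is finite, $\mathrm{supp}(e)$ contains an infinite sequence $v_1,v_2,\dots$ of non-archimedean places. Fix a uniformizer $\pi_{v_i}\in K_{v_i}$ and define $a\in\A_K$ by $a(v_i)=\pi_{v_i}$ for every $i$, $a(w)=1$ for $w\in\mathrm{supp}(e)\setminus\{v_i\}_{i\ge 1}$, and $a(w)=0$ off $\mathrm{supp}(e)$. All components lie in the local valuation rings, so $a$ is a genuine adele with $a=ea$. Any $b\in\A_K$ satisfying $a=a^2 b$ must then have $b(v_i)=1/\pi_{v_i}\notin \cO_{v_i}$ for every $i$, contradicting the integrality-almost-everywhere condition that any adele satisfies. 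Hence $e\A_K$ is not von Neumann regular.

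\emph{Finite adeles and main obstacle.} The argument for the analogous ideal in $\A_K^{fin}$ is identical, there being no archimedean component to worry about. The one subtle point is the converse, where one must arrange infinitely many bad coordinates simultaneously so that the putative quasi-inverse robustly escapes the adele ring. Note that the argument does not actually invoke Theorem \ref{CDLM-th}: one could equivalently phrase the defining condition as ``every non-zero-divisor of $e\A_K$ is a unit of $e\A_K$'' and exploit $\Phi_{\mathrm{val}}$ if a finer separation of archimedean and non-archimedean local factors is desired, but the plain von Neumann regularity formula already sits comfortably within the claimed $\exists\forall\exists$-complexity.
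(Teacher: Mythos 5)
Your proof is correct, but takes a genuinely different route from the paper's. The paper's proof of Theorem~\ref{fin-def} goes through the uniform valuation-ring formula $\Phi_{val}(x)$ of Theorem~\ref{CDLM-th}: the non-archimedean part of a finite-support idempotent is exhibited as the Boolean value $[[\neg\Phi_{val}(a)]]^{na}$ for a suitably chosen adele $a$ having a pole at exactly the places in the support, the point being that any adele fails $\Phi_{val}$ at only finitely many places. You instead characterise finite support intrinsically by von Neumann regularity of $e\A_K$, and your argument never invokes Theorem~\ref{CDLM-th}. This is essentially the equivalence the paper itself records in Section~4 (an idempotent $e$ has finite support iff $ae$ is von Neumann regular for every $a$), but there the paper expands von Neumann regularity via the clause ``$(ae)$ is generated by an idempotent'' and so only claims a $\forall\exists\forall\exists$ prenex form. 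Your equivalent one-liner $ea=(ea)^2 b$ yields a $\forall\exists$ formula, which not only meets but in fact sharpens the $\exists\forall\exists$ bound asserted in the theorem (it is $\Pi_2$ rather than $\Sigma_3$). So you gain elementarity --- no appeal to the nontrivial Theorem~\ref{CDLM-th} --- and a tighter quantifier complexity; what you lose is the explicit link between $\F_K$ and the valuation-ring formula $\Phi_{val}$, a connection the paper exploits elsewhere in its manipulation of Boolean values and its discussion of uniformity in $K$.
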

\begin{proof} 
$\Phi_{val}(x)$ uniformly defines the valuation ring of $K_v$ 
for all number fields $K$ and all $v\in V_K^f$. 
We claim that $e\in \F_K$ if and only if 
$$\Bbb A_K\models\exists x\exists e_1\exists e_2 (e_1=e_1^2\wedge e_2=e_2^2\wedge e=e_1+e_2\wedge e_2=[[\neg\Phi_v(x)]]^{na}).$$
Indeed, let $a\in \Bbb A_K$ and idempotents $e_1,e_2$ satisfy $e=e_1+e_2$ and 
$e_2=[[\neg\Phi_{val}(a)]]^{na}$. Since $a$ is an adele, there are only finitely many $v\notin \mathrm{Arch}(K)$ satisfying 
$v(a(v))<0$. Thus $e_2$ has finite support. Since there are only finitely many archimedean valuations, 
$e$ has finite support.

Conversely, let $e$ be an idempotent with 
finite support. Let $\pi_v$ denote a uniformizing element of $K_v$. Define an adele $a$ as follows. We set $a(v)=1$ if $v\in \mathrm{Arch}(K)$. If $v\notin \mathrm{Arch}(K)$, we put 
$a(v)=\pi_{v}^{-1}$ if $e(v)=1$, and $a(v)=0$ if $e(v)=0$.  Then $a$ is clearly an adele.  
Let $e_2=[[\neg\Phi_{val}(a)]]^{na}$.  Then $e_2$ has finite support. Let $e_1=1-e_2$. 
Then $e_1$ has finite support and $e=e_1+e_2$.

This definition of $\Bbb F_K$ is uniform for all number fields $K$, since the definition 
$[[\Phi^{\mathrm{Val}}(x)]]^{na}$ and the definition $\Phi_{val}(x)$ 
are uniform for all number fields $K$. 

For the case of the finite adeles $\A_{K}^{fin}$ note that an idempotent $e$ has finite supprt if and only if
$$\A_{K}^{fin}\models \exists x~(e=[[\neg \Phi_{val}(x)]]).$$ 
Indeed, the right to left implication is 
clear. For the other implication, given a finite support $e$, define $a\in \A_{K}^{fin}$ by 
$a(v)=\pi_v^{-1}$ if $e(v)=1$, and 
$a(v)=0$ if $e(v)=0$. Then $e=[[\neg \Phi_{val}(a)]]$.\end{proof}

\section{Von Neumann regularity}

We call an element $a$ in a commutative ring von Neumann regular if the ideal generated by 
$a$ (which we denote by $(a)$) is generated by an idempotent. We call a ring von Neumann regular if every element is 
von Neumann regular. Given $a\in \Bbb A_K$, denote by $[[a]]_+$ the idempotent which is supported 
on the set 
$$\{v\in V_K: v(a(v))>0 \wedge a(v)\neq 0\},$$
and by $[[a]]_{+}^{fin}$ the idempotent which is supported 
on the set 
$$\{v\in V_K^{fin}: v(a(v))>0 \wedge a(v)\neq 0\}.$$
% by $[[a]]_{-}$ the 
%idempotent which is supported on $\{v\in V_K: v(a(v))<0\}$, and by 
%$[[a]]_U$ the idempotent which is supported on $\{v\in V_K: v(a(v))=0\}$.  

We shall denote by $supp(a)$ the set $\{v\in V_K: a(v)\neq 0\}$

\begin{prop}\label{v-n-reg} An element $a\in \Bbb A_K$ is von Neumann regular if and only if the idempotent 
$[[a]]_+$ has finite support.
\end{prop}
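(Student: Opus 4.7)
The plan is to reformulate von Neumann regularity of $a$ as a condition checked place by place. Since each completion $K_v$ is a field, the equation $a(v)b(v)a(v)=a(v)$ is automatically satisfied wherever $a(v)=0$ and forces $b(v)=a(v)^{-1}$ wherever $a(v)\neq 0$. Hence $a$ is von Neumann regular if and only if the element $b$ defined by $b(v)=a(v)^{-1}$ on $\mathrm{supp}(a)$ and $b(v)=0$ elsewhere belongs to $\A_K$; when it does, $e:=ab$ is the idempotent supported on $\mathrm{supp}(a)$, and the identities $a=ea$ and $e=ab$ yield $(a)=(e)$.

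For the forward implication, assume some $b\in\A_K$ satisfies $aba=a$. At every non-archimedean $v\in\mathrm{supp}(a)$ we then have $b(v)=a(v)^{-1}$. The restricted product condition requires $b(v)\in\mathcal{O}_v$ for almost all $v\in V_K^f$, which translates to $v(a(v))\leq 0$ for almost all non-archimedean $v\in\mathrm{supp}(a)$. Since $\mathrm{Arch}(K)$ is finite, this is precisely the statement that $[[a]]_+$, supported on $\{v:v(a(v))>0\wedge a(v)\neq 0\}$, has finite support.

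For the converse, assume $[[a]]_+$ has finite support and define $b$ as above. Because $a$ is an adele, $v(a(v))\geq 0$ for almost all $v\in V_K^f$; combined with the hypothesis, this gives $v(a(v))=0$, and hence $a(v)\in\mathcal{O}_v^{\times}$ and $b(v)=a(v)^{-1}\in\mathcal{O}_v$, for almost all non-archimedean $v\in\mathrm{supp}(a)$. Thus $b\in\A_K$, and $aba=a$ by construction, so $a$ is von Neumann regular.

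I do not expect any real obstacle: the whole argument is an unwinding of the restricted-product definition together with the fact that each $K_v$ is a field. The only minor subtlety is the handling of archimedean places, where $b(v)=a(v)^{-1}$ automatically lies in $K_v$ and causes no problem because $\mathrm{Arch}(K)$ is finite.
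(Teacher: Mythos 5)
Your proof is correct and follows essentially the same route as the paper's: invert $a$ place by place on its support and observe that the restricted-product (almost-everywhere integrality) condition on that inverse is exactly the finiteness of the support of $[[a]]_+$. The only cosmetic difference is that you argue through the characterization $aba=a$ while the paper works directly with its definition that $(a)$ is generated by an idempotent; you supply the bridge in one direction via $e:=ab$ and $(a)=(e)$, and the other direction (from $(a)=(e)$ with $a=be$, $e=ca$ one gets $a=ae=aca$) is a standard one-line commutative-ring fact that deserves an explicit word.
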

\begin{proof} Suppose that $[[a]]_+$ has finite support. Let $e=[[x\neq 0]](a)$. 
Define $c\in \A_K$ by $c(v)=a(v)^{-1}$ if $e(v)=1$, and $c(v)=0$ otherwise. It is clear that 
$a=ea$ and $e=ca$. Thus $a$ is von Neumann regular. 

Conversely, 
assume that $a=be$ and $e=ca$ for $b,c\in \A_K$ and idempotent $e$. 
Then $supp(a)=supp(e)$, hence $supp(a)=e$. Moreover $a(v)$ 
is invertible for all $v$ satisfying $e(v)=1$. Since $c$ is an adele, we deduce that 
$[[a]]_+$ is finite.
\end{proof}

\begin{prop} An element $a\in \Bbb A_{K}^{fin}$ is von Neumann regular if and only if 
$[[a]]_+^{fin}$ has finite support.\end{prop}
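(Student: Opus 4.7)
The plan is to adapt the proof of Proposition \ref{v-n-reg} to the finite adele setting; the key observation is that $\A_K^{fin}$ is itself a restricted direct product, so the same recipe of inverting $a$ on its support and checking integrality works, with only the archimedean slots removed from consideration.

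For the easy direction, I would assume that $[[a]]_+^{fin}$ has finite support and set $e = [[x\neq 0]]^{fin}(a)$, the support idempotent of $a$ in $\A_K^{fin}$. I define $c \in \prod_{v\in V_K^f} K_v$ by $c(v) = a(v)^{-1}$ when $e(v)=1$ and $c(v) = 0$ otherwise. To show $c \in \A_K^{fin}$ I need $v(c(v))\ge 0$ for almost all $v$. Because $a \in \A_K^{fin}$, $v(a(v))\ge 0$ for almost all $v\in V_K^f$; because $[[a]]_+^{fin}$ has finite support, $v(a(v))\le 0$ holds for almost all $v\in \mathrm{supp}(a)$. Combining, $v(a(v))=0$ off a finite exceptional set inside $\mathrm{supp}(a)$, so $v(c(v))=0$ for almost all $v$, giving $c\in \A_K^{fin}$. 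Then $a = ae$ and $e = ca$, so $(a) = (e)$ in $\A_K^{fin}$.

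For the converse, suppose $a = be$ and $e = ca$ with $e$ idempotent and $b,c\in \A_K^{fin}$. The identity $a = be$ forces $\mathrm{supp}(a) \subseteq \mathrm{supp}(e)$, and $e = ca$ forces $\mathrm{supp}(e) \subseteq \mathrm{supp}(a)$, so $e$ is exactly the support idempotent of $a$ in $\A_K^{fin}$. On this support, $c(v)a(v) = 1$, so $a(v)$ is a unit in $K_v$ with $a(v)^{-1} = c(v)$. Since $c \in \A_K^{fin}$, we have $v(c(v)) \ge 0$, hence $v(a(v)) \le 0$ for almost all $v\in \mathrm{supp}(a)$; combined with $v(a(v))\ge 0$ for almost all $v$, we conclude $v(a(v))=0$ for almost all $v\in \mathrm{supp}(a)$, i.e., $[[a]]_+^{fin}$ has finite support.

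There is no real obstacle here, since the argument is formally identical to that of Proposition \ref{v-n-reg}; the only thing to be a little careful about is that the support idempotent and the Boolean value $[[\,\cdot\,]]_+^{fin}$ are now being computed inside $\A_K^{fin}$ rather than inside $\A_K$, but this is harmless because both are definable uniformly in the relevant structure and the archimedean places played no essential role in the original argument beyond being finite in number.
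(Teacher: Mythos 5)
Your proof is correct and follows exactly the route the paper takes: the paper's own argument for this proposition is just the argument of Proposition \ref{v-n-reg} transplanted to $\A_K^{fin}$, inverting $a$ on its support idempotent $e=[[x\neq 0]]^{fin}(a)$ for one direction and reading off integrality of $c$ from $e=ca$ for the other. Your write-up in fact spells out the valuation bookkeeping (that $v(a(v))=0$ off a finite set of the support) more explicitly than the paper does, but the substance is the same.
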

\begin{proof} The proof is similar to Proposition \ref{v-n-reg}. 
Consider $a$ with $[[a]]_+^{fin}$ of finite support. Let 
$e=[[x\neq 0]]^{fin}(a)$ and define $c$ by $c(v)=a(v)^{-1}$ if $e(v)=1$ where $v\in V_K^{f}$ 
and argue similarly as in proof of Proposition \ref{v-n-reg}. 

For the converse, argue similar to proof of Proposition \ref{v-n-reg} replacing $supp(a)$ by $[[a]]_+^{fin}$.
\end{proof}

\begin{cor} $\A_K^{fin}$ is not von Neumann regular.
 
\end{cor}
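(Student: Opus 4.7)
The plan is to exhibit a single element $a \in \A_K^{fin}$ that fails the criterion of the preceding proposition, namely for which $[[a]]_+^{fin}$ has infinite support. By that proposition this is enough to conclude $\A_K^{fin}$ is not von Neumann regular.

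The obvious candidate is the adele $a$ defined by $a(v) = \pi_v$ for every $v \in V_K^f$, where $\pi_v$ is a uniformizer of $K_v$. This is a legitimate element of $\A_K^{fin}$ because $v(a(v)) = 1 \geq 0$ for all $v$, so in particular $a(v) \in \cO_v$ for every finite $v$ (so the ``almost all'' integrality condition in the restricted product definition is trivially satisfied). Then $a(v) \neq 0$ and $v(a(v)) = 1 > 0$ at every finite place, so the set
\[
\{v \in V_K^f : v(a(v)) > 0 \wedge a(v) \neq 0\}
\]
equals $V_K^f$, which is infinite. Hence $[[a]]_+^{fin}$ does not lie in the ideal of finite support idempotents, and the previous proposition forces $a$ to fail to be von Neumann regular.

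No real obstacle is expected: the entire content is in the choice of witness. One small thing to check is that the construction does not require any choice of uniformizers compatible across places; since the definition of $[[a]]_+^{fin}$ depends only on the valuations $v(a(v))$ and on whether the components are nonzero, any measurable choice $\pi_v$ place-by-place works. Alternatively one may take a rational integer prime $p$ and the constant adele $a = (p,p,\dots)$, for which $v(a(v)) > 0$ exactly at the (infinitely many) finite places $v$ dividing $p$ in $K$, and $a(v) \neq 0$ everywhere; this gives the same conclusion without having to invoke local uniformizers.
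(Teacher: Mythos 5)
Your main construction is correct and is surely what the paper's terse ``Immediate'' has in mind: taking $a(v)=\pi_v$ for every finite place $v$ gives an element of $\A_K^{fin}$ (each component lies in $\cO_v$, so the restricted-product condition is trivially met), and $[[a]]_+^{fin}$ is then supported on all of $V_K^f$, which is infinite; the preceding proposition then says $a$ is not von Neumann regular. There is no compatibility issue in choosing the $\pi_v$'s place-by-place, as you correctly note.

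However, your proposed alternative witness is wrong. For a fixed rational prime $p$, the number of places of $K$ lying above $p$ is \emph{finite} (it is at most $[K:\Q]$), not infinite as you assert. For the constant adele $a=(p,p,\dots)$ one has $v(a(v))>0$ only at those finitely many $v$, so $[[a]]_+^{fin}$ has \emph{finite} support and this particular $a$ is in fact von Neumann regular by the same proposition. So the ``simpler'' witness does not work; you really do need something like the tuple of uniformizers (or any adele whose components have positive valuation at infinitely many places, e.g.\ $a(v)=\pi_v^{\,n_v}$ for any choice of positive integers $n_v$, or an adele that is a varying prime at each place). The first half of the argument stands; delete or repair the final alternative.
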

\begin{proof} Immediate.
\end{proof}

Note that this is in contrast to the case of direct products of fields which are von Neumann regular.

\begin{cor} An idempotent $e\in \A_K$ (resp. $e\in \A_{K}^{fin}$) 
has finite support if and only if for all $a\in \Bbb A_K$ 
(resp. $a\in \A_{K}^{fin}$), $ae$ is von Neumann regular.
\end{cor}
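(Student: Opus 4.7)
The plan is to reduce both directions to the characterization of von Neumann regular elements given in Proposition \ref{v-n-reg} (and its $\A_K^{fin}$ analogue). The argument is essentially a pointwise construction at each place.

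For the forward direction, assume $e$ has finite support. For any $a\in\A_K$, the pointwise product $ae$ satisfies $(ae)(v)=0$ whenever $e(v)=0$, so $\mathrm{supp}(ae)\subseteq \mathrm{supp}(e)$. In particular the set
$$\{v\in V_K\suchthat v((ae)(v))>0 \wedge (ae)(v)\neq 0\}$$
is contained in $\mathrm{supp}(e)$ and hence finite. Thus $[[ae]]_+$ has finite support and Proposition \ref{v-n-reg} gives that $ae$ is von Neumann regular.

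For the converse, suppose $e\in\A_K$ has infinite support. Since $\mathrm{Arch}(K)$ is finite, the set $S:=\mathrm{supp}(e)\cap V_K^f$ is still infinite. Define $a\in\A_K$ by $a(v)=\unif_v$ (a uniformizer of $K_v$) for $v\in S$, and $a(v)=0$ otherwise. Since $\unif_v\in\mathcal{O}_v$, this is indeed an adele. Then $(ae)(v)=\unif_v$ for $v\in S$ and $(ae)(v)=0$ for $v\notin S$, so $v((ae)(v))=1>0$ and $(ae)(v)\neq 0$ precisely when $v\in S$. Hence $[[ae]]_+$ is supported on the infinite set $S$, and Proposition \ref{v-n-reg} shows that $ae$ is not von Neumann regular. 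This contradicts the hypothesis, so $e$ must have finite support.

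The $\A_K^{fin}$ case is handled identically, and in fact a little more cleanly: there are no archimedean valuations to discard, so if $\mathrm{supp}(e)$ is infinite one takes the same $a$ as above (viewed in $\A_K^{fin}$), and the analogous proposition for $\A_K^{fin}$ yields that $ae$ is not von Neumann regular in $\A_K^{fin}$. I do not see a genuine obstacle here; the only thing to keep track of is the archimedean places in the global case, which are finite in number and hence harmless.
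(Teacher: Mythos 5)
Your proof is correct and follows the paper's approach exactly: the forward direction reduces to Proposition \ref{v-n-reg} via $\mathrm{supp}(ae)\subseteq\mathrm{supp}(e)$, and the converse uses precisely the paper's witness $a$ with $a(v)=\unif_v$ on the (non-archimedean part of the) support of $e$. You are slightly more careful than the paper in restricting to $S=\mathrm{supp}(e)\cap V_K^f$ when defining $a$ (the paper writes $a(v)=\pi_v$ for all $v$ with $e(v)=1$, which is a small imprecision at archimedean places), but this is a cosmetic refinement of the same argument, not a different route.
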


\begin{proof} We shall only prove the statements for $\A_K$ since the case of finite adeles $\A_{K}^{fin}$ is completely 
analogous. The left to right direction is clear. Conversely, assume $ae$ has is von Neumann regular for all $a$. 
Suppose $e$ has infinite support. Define $a\in \A_K$ by $a(v)=\pi_v$ if $e(v)=1$ and $a(v)=0$ otherwise. 
Then $a=ae$ and $[[a]]_+=[[ae]]_+=e$ has infinite support, contradiction.\end{proof}

The statement that "$x$ is von Neumann regular" can be expressed by an $\exists\forall\exists$-formula in the variable $x$ 
in the language of rings. This yields an $\forall\exists\forall\exists$-formula defining the ideal of finite support idempotents in $\A_K$ (and in 
$\A_K^{fin}$ uniformly for all 
number fieds $K$. 

\begin{prop}\label{prop-v-n-r} Let $a\in \Bbb A_K$. Then $(a) \subseteq ([[a]])$. Moreover, $(a)=([[a]])$ if and only if 
$a$ is von Neumann regular.\end{prop}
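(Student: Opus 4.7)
The plan is to unwind the definitions: $[[a]]$ is shorthand for $\mathrm{supp}(a)$, the idempotent which takes value $1$ at every $v$ with $a(v) \neq 0$ and $0$ elsewhere. For the first inclusion, I would observe that componentwise multiplication gives $a \cdot [[a]] = a$: at places $v$ where $a(v) \neq 0$ we multiply by $1$, and at places where $a(v)=0$ both sides vanish. Hence $a \in ([[a]])$, so $(a) \subseteq ([[a]])$.

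For the equivalence, combining with the first inclusion reduces $(a) = ([[a]])$ to the single condition $[[a]] \in (a)$. The forward direction is almost tautological: if $(a)=([[a]])$ then the ideal $(a)$ is generated by the idempotent $[[a]]$, so $a$ is von Neumann regular directly from the definition in Section~4. For the backward direction, suppose $a$ is von Neumann regular, so $(a) = (e)$ for some idempotent $e \in \mathbb{A}_K$. Writing $a = be$ forces $a(v)=0$ whenever $e(v)=0$, hence $\mathrm{supp}(a) \le e$ in $\mathbb{B}_K$; writing $e = ca$ forces $e(v)=0$ whenever $a(v)=0$, hence $e \le \mathrm{supp}(a)$. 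Thus $e = [[a]]$ and $(a) = (e) = ([[a]])$.

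There is no serious obstacle here, since everything follows from the pointwise description of the support idempotent and the definition of von Neumann regularity. As a sanity check one could cross-reference Proposition~\ref{v-n-reg}: in the backward direction, the explicit $c$ satisfying $ca = [[a]]$ must have $c(v) = a(v)^{-1}$ at every $v \in \mathrm{supp}(a)$, and such a $c$ lies in $\mathbb{A}_K$ precisely when $\{v : v(a(v)) > 0,\ a(v) \neq 0\}$ is finite, i.e., when $[[a]]_+$ has finite support, which is exactly the criterion from Proposition~\ref{v-n-reg}. I would not include this computation in the final write-up, however, since the argument via the abstract characterization of von Neumann regularity is shorter and self-contained.
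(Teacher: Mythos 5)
Your proof is correct and is the natural unwinding of the definitions; the paper marks this proposition as ``Immediate'' and supplies no argument, so there is nothing to compare against. The key observations you make are exactly what is needed: $a\cdot[[a]]=a$ gives the inclusion $(a)\subseteq([[a]])$, the forward direction of the equivalence is tautological from the definition of von Neumann regularity, and in the backward direction the two pointwise comparisons $\mathrm{supp}(a)\le e$ (from $a=be$) and $e\le\mathrm{supp}(a)$ (from $e=ca$) pin down $e=[[a]]$. Your cross-check against Proposition~\ref{v-n-reg} is a sensible sanity test but, as you note, unnecessary for the self-contained argument.
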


\begin{proof} Immediate.\end{proof}

\section{Model theory of Boolean algebras}\label{sec-bool}

In this section we state the results from the model theory of Boolean algebras from \cite{DM-bool} that we need. We let 
$\frak{L_0}$ be the usual language for Boolean algebras with signature $\{\vee,\wedge,^{-},0,1\}$, and let $\frak{L}$ an 
enrichment of it. While some of our results will hold for an arbitrary enrichment $\frak{L}$, the main applications concern the following enrichments:

{\it (i):} The enrichment of $\frak{L}_0$ by unary predicates $C_j(x)$ expressing that there are at least $j$ distinct atoms below $x$, and a predicate $Fin(x)$ for the ideal of finite sets. We shall denote this language by $\frak{L}^{fin}$. Let $T^{fin}$ denote the theory of infinite atomic Boolean algebras in the language $\frak{L}^{fin}$ with defining axioms for $C_j$ and $Fin$ (cf.~\cite{DM-bool} for details). The following result was proved by Feferman-Vaught \cite{FV}, and a new proof was given in \cite{DM-bool}.

\begin{thm}\label{bool1}\cite{FV,DM-bool} The theory $T^{fin}$ in the language $\frak{L}^{fin}$ is complete, decidable and 
has quantifier elimination.\end{thm}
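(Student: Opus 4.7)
My plan is to prove quantifier elimination directly; completeness and decidability then drop out by inspecting what quantifier-free sentences can look like.

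Following the usual Tarski procedure, it suffices to eliminate a single existential quantifier: given $\exists x\,\phi(x,\bar y)$ with $\phi$ quantifier-free, produce an equivalent quantifier-free $\psi(\bar y)$. Put $\phi$ in disjunctive normal form and distribute $\exists x$ over the disjunction, reducing to the case where $\phi$ is a conjunction of atomic and negated atomic formulas. The tuple $\bar y=(y_1,\ldots,y_n)$ determines a partition of $1$ into the $2^n$ constituents $c_\sigma := \bigwedge_i y_i^{\sigma(i)}$ for $\sigma\colon\{1,\ldots,n\}\to\{+,-\}$, where $y^+:=y$ and $y^-:=\bar y$. Every atomic $\mathfrak{L}^{fin}$-formula in $x$ and $\bar y$ can be rewritten as a Boolean combination of (i) atomic formulas about the $c_\sigma$ alone and (ii) atomic formulas of the form $C_j(x\wedge c_\sigma)$, $C_j(c_\sigma\setminus x)$, $Fin(x\wedge c_\sigma)$, $Fin(c_\sigma\setminus x)$.

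The quantifier now splits across constituents: $\exists x$ of a conjunction of such atomic facts is equivalent to the conjunction over $\sigma$ of ``$\exists x_\sigma\le c_\sigma$ realising the prescribed constraints on $(x_\sigma,\, c_\sigma\setminus x_\sigma)$''. The key combinatorial step is therefore to characterise, within a single constituent $c$, which pairs of ``sizes'' can be realised as $(|x|,|c\setminus x|)$ for $x\le c$, where a size lies in $\N\cup\{\infty\}$. The answer is the natural one: if $c$ is finite with exactly $k$ atoms (read off from the $C_j(c)$ and $Fin(c)$ predicates), the realisable pairs are exactly the pairs of naturals summing to $k$; if $c$ has infinite support, exactly the pairs $(s_1,s_2)$ with $s_1=\infty$ or $s_2=\infty$ are realisable. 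In each case the realisability condition, viewed as a predicate in the data about $c$, is expressible by a Boolean combination of $C_j(c)$ and $Fin(c)$. Substituting $c=c_\sigma(\bar y)$ for each $\sigma$ and conjoining gives the desired quantifier-free $\psi(\bar y)$.

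Completeness and decidability are then immediate. After QE any sentence reduces to a Boolean combination of the closed atomic formulas $C_j(0)$, $C_j(1)$, $Fin(0)$, $Fin(1)$; the axioms of $T^{fin}$ stipulate that $C_j(0)$ is false for $j\ge 1$, $Fin(0)$ is true, $C_j(1)$ is true for all $j$ (since models are infinite atomic), and $Fin(1)$ is false, so every such sentence is decided by $T^{fin}$. Decidability follows because the elimination algorithm is effective.

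The main obstacle is the bookkeeping in the finite-constituent case: when $c$ has exactly $k$ atoms one must enforce $|x\wedge c|+|c\setminus x|=k$ and verify that the resulting disjunction over realisable partitions is expressible as an $\mathfrak{L}^{fin}$-Boolean combination of $C_j(c)$ and $Fin(c)$ alone. A secondary nuisance is handling empty constituents (arising from non-trivial inclusions among the $y_i$), but these only simplify the analysis.
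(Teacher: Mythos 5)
The paper does not prove this theorem itself: it is quoted from Feferman--Vaught \cite{FV} and from the direct quantifier-elimination proof in \cite{DM-bool}, and your strategy (eliminate one existential quantifier, split the parameters into the constituents $c_\sigma$, reduce to a realisability question for the pair $(x\wedge c_\sigma,\,c_\sigma\setminus x)$ inside each constituent) is essentially the strategy of that cited direct proof. The reduction of atomic formulas to local data on the constituents and the splitting of $\exists x$ across constituents are fine.

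There is, however, a genuine gap at the key combinatorial step. You characterise realisable pairs by ``sizes in $\N\cup\{\infty\}$'' and a dichotomy ``$c$ has exactly $k$ atoms'' versus ``$c$ has infinite support''; this is reasoning inside the intended model $Powerset(I)$ with $Fin$ the genuinely finite sets, not from the axioms of $T^{fin}$. Quantifier elimination (and hence completeness) for the axiomatised theory requires the equivalence $\exists x\,\phi\leftrightarrow\psi$ to hold in \emph{every} model of $T^{fin}$, and verifying it only in the standard model is circular, since completeness is exactly what is at stake. Concretely, two things go wrong as written. First, in a general model there are ``pseudo-finite'' elements $c$ with $Fin(c)$ but $C_j(c)$ for every $j$ occurring in the formula, so your dichotomy is not exhaustive; the correct invariant of a constituent is the (finite) pattern of $C_j(c)$ together with the truth value of $Fin(c)$, and the realisability table must be proved from the axioms in those terms. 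Second, and more seriously, your claim that an ``infinite'' $c$ admits the pair $(\infty,\infty)$ is not a consequence of ``infinite atomic Boolean algebra, $Fin$ an ideal containing the atoms, $\neg Fin(1)$'': in the algebra of finite and cofinite subsets of $\N$ with $Fin$ the finite sets, no non-$Fin$ element splits into two disjoint non-$Fin$ elements. So at precisely this point you must invoke the specific defining axioms of $T^{fin}$ from \cite{DM-bool} (in effect, that the quotient modulo $Fin$ is atomless, equivalently a splitting axiom for non-$Fin$ elements), and prove the other entries of the table from atomicity and the ideal axioms (e.g.\ that $\neg C_{k+1}(x)$ forces $x$ to be a join of at most $k$ atoms, hence $Fin(x)$). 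With the realisability lemma restated in terms of $C_j$/$Fin$ patterns and derived from these axioms, your elimination, and then the completeness and decidability argument via closed quantifier-free sentences, goes through as you outline.
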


{\it (ii):} The enrichment $\frak{L}^{fin,res}$ of $\frak{L}^{fin}$ by unary predicates $Res(n,r)(x)$ for $n,r\in \Z$, $n>0$, with the intended interpretation, in $Powerset(I)$, that $Fin(x)$ holds and the cardinality of $x$ is congruent to $r$ modulo $n$. Let $T^{fin,res}$ denote the theory of infinite atomic Boolean algebras in the language $\frak{L}^{fin,res}$ with the defining axioms for the predicates 
$C_j(x),~Fin(x)$ and $Res(n,r)(x)$ (cf. \cite{DM-bool} for details). The following theorem was proved in \cite{DM-bool}.

\begin{thm}\label{bool2}\cite{DM-bool} The theory $T^{fin,res}$ in the language $\frak{L}^{fin,res}$ is complete, decidable, and has 
quantifier elimination.\end{thm}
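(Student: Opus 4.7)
My plan is to prove Theorem~\ref{bool2} by refining the quantifier-elimination argument for $T^{fin}$ (Theorem~\ref{bool1}): the predicates $Res(n,r)$ contribute only local information at the finite part of each atom of a finite Boolean subalgebra, and the standard one-step QE test (equivalently, a back-and-forth between sufficiently saturated models) goes through essentially unchanged. Given $A,B\models T^{fin,res}$ with $B$ suitably saturated, a partial $\mathfrak{L}^{fin,res}$-isomorphism $f\colon A_0\to B_0$ between finitely generated subalgebras, and $a\in A$, the task is to produce $b\in B$ realizing the $\mathfrak{L}^{fin,res}$-type of $a$ over $A_0$.

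First, I would pass to the atoms $e_1,\dots,e_N$ of the subalgebra generated by $A_0$ and note that both the $\mathfrak{L}^{fin,res}$-type of $a$ over $A_0$ and the task of realizing it split cleanly across these atoms: it suffices, for each $i$, to choose $b_i\le f(e_i)$ in $B$ so that the pair $(b_i,\,f(e_i)\wedge \bar{b}_i)$ has the same $\mathfrak{L}^{fin,res}$-type as $(a\wedge e_i,\,\bar{a}\wedge e_i)$, and then to set $b=\bigvee_i b_i$. The core is thus a \emph{local realization problem}: given $e\in B$ of known $\mathfrak{L}^{fin,res}$-type and a prescribed joint type $\sigma$ for a disjoint pair $(u,v)$ with $u\vee v=e$, decide when $\sigma$ is realizable, and show the realizability condition is quantifier-free in the parameters describing the type of $e$.

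The cases split naturally. In case (i), $\neg Fin(e)$: any compatible pair of $\mathfrak{L}^{fin,res}$-types is realized, since an infinite atomic Boolean algebra carries sub-elements of every finite cardinality (hence every residue class) together with complements that can be chosen infinite or of any prescribed residue. In case (ii), $Fin(e)$ with prescribed $C_j$-counts and residues $|e|\bmod n$: the realizable pairs $(u,v)$ are exactly those for which the $C_j$-counts of $u$ and $v$ sum correctly to those of $e$ and for which $Res(n,r_u)(u)\wedge Res(n,r_v)(v)$ holds with $r_u+r_v\equiv |e|\pmod n$ for each $n$. Both cases express realizability by a quantifier-free $\mathfrak{L}^{fin,res}$-condition on the given data, and assembling these conditions across $e_1,\dots,e_N$ yields the required quantifier-free equivalent of $\exists y\,\phi(y,\bar x)$.

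I expect the main obstacle to be the uniform handling of the interaction between the residue predicates at all moduli $n$ and the exact-counting predicates $C_j$ in case (ii): one must verify that the congruences $r_u+r_v\equiv |e|\pmod n$, together with the $C_j$-compatibility, cut out \emph{precisely} the realizable joint types with no further hidden obstruction (in particular, that there is no arithmetic constraint coming from simultaneous residue information that fails to be witnessed by an actual splitting). Once QE is established, completeness follows because the quantifier-free $\mathfrak{L}^{fin,res}$-theory is pinned down by the axioms of $T^{fin,res}$ (with prime subalgebra $\{0,1\}$), and decidability follows from QE together with the effective computability of the atomic predicates $C_j$, $Fin$, and $Res(n,r)$ on Boolean terms.
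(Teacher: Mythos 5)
The paper does not prove this theorem; it is cited from \cite{DM-bool} as a black box, so there is no in-paper argument to compare yours against. Your sketch is the natural route — a one-step quantifier-elimination test by back-and-forth over (suitably saturated) models, with the existential reduced, atom by atom in the finite subalgebra generated by the parameters, to a local splitting problem — and is very plausibly in the spirit of what \cite{DM-bool} does.

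That said, the obstacle you flag in case (ii) is the entire content of the proof, and as written your sketch leaves it open rather than resolving it. Two points need actual verification there. First, the predicates $Res(n,r)$ at different moduli are not independent: the axioms of $T^{fin,res}$ must (and do) encode coherence, so that $Res(mn,r)(x)$ forces $Res(n,r \bmod n)(x)$ and $Res(n,r)(x)\wedge Res(m,s)(x)$ is consistent only when $r\equiv s\pmod{\gcd(n,m)}$; your splitting must therefore produce for $u$ a \emph{coherent} residue system (an element of $\widehat{\mathbb{Z}}$), not merely satisfy one congruence at a time. Second, note that on a standard finite $e$ of exact size $k$ the $Res$ data is redundant, being determined by the $C_j$'s through $r\equiv k\pmod n$; so the $Res$ predicates genuinely constrain the splitting only on pseudo-finite elements $e$ (those with $Fin(e)$ but $C_j(e)$ for every standard $j$), and it is precisely there that you must show the congruences $r_u+r_v\equiv r_e\pmod n$, together with the $C_j$-bookkeeping and the trichotomy (standard finite / pseudo-finite / infinite) for $u$ and $v$, exhaust the obstructions to finite satisfiability of the prescribed joint type. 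Once both points are checked, the passage from QE to completeness (since closed $\mathfrak{L}^{fin,res}$-atomic formulas are decided by $T^{fin,res}$) and to decidability (by effectivity of the elimination) goes through as you indicate.
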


\section{Model theory of restricted products and direct products}\label{sec-rest}

In this section we state the results from \cite{DM-supp} that we will be using. These results generalize the work of 
Feferman and Vaught in \cite{FV} on quantifier elimination in a language for restricted products relative to 
the theory of the factors and the theory of the Boolean algebra of the index set in given languages.

Let $L$ be a 1-sorted or many-sorted first-order language. We assume that $L$ has the equality symbol $=$ of various sorts, 
and may have relation symbols and 
function-symbols. For accounts of many-sorted logic see 
\cite{KK,feferman,pillay-book,MR}. We do not assume as in 
\cite{pillay-book} that the sorts are disjoint. The well-formed equality statements demand that the terms involved be of the same sort.

Let $\mathcal{L}_0$ be the language for Boolean algebras $\{0,1,\wedge,\vee,\bar{} \ \}$. Let 
$\mathcal{L}$ be {\it any} extension of $\mathcal{L}_0$. Let 
$I$ be an index set, with associated Boolean algebra $\B:=Powerset(I)$ (which denotes 
the powerset of $I$). We denote by $\B_{\mathcal{L}}$ the $\mathcal{L}$-structure on $\B$ with the usual 
interpretations for $\{0,1,\wedge,\vee,\bar{} \ \}$.

Given an $L$-structure $M$, and a sort $\sigma$, let $\sigma(M)$ denote the $\sigma$-sort of $M$. 
Consider a family $\{M_i: i\in I\}$ of $L$-structures with product $\Pi=\prod_{i\in I} M_i$. 
The product 
$$\prod_{i\in I} Sort_{\sigma}(M_i)$$
is defined to be the $\sigma$-sort of the product $\Pi$. The elements are functions 
$f_{\sigma}$ on $I$ satisfying 
$$f_{\sigma}(i) \in Sort_{\sigma}(M_i)$$
for all $i$. Hence the product $\Pi$ is $L$-sorted. 

We shall write 
$$\bar f_{\sigma_1},\dots,\bar f_{\sigma_j},\dots$$
for tuples of elements of sorts 
%$\sigma$, and 
$\sigma_1,\dots,\sigma_j,\dots$ respectively; and 
%$$\bar w_{\sigma}$ for 
$$\bar x_{\sigma_1},\dots,\bar x_{\sigma_j},\dots$$
for tuples of $L$-variables of sorts 
%$$$\sigma$. 
$\sigma_1,\dots,\sigma_j,\dots$ respectively.

Let $\tau$ be a function-symbol of sort 
$$\sigma_1\times \dots \times \sigma_r \rightarrow \sigma.$$ 
Then the interpretation of $\tau$ in $\Pi$ is given by 
$$\tau^{(\Pi)}(\bar f_{\sigma_1},\dots,\bar f_{\sigma_r})(i)=
\tau^{(M_i)}(\bar f_{\sigma_1}(i),\dots,\bar f_{\sigma_r}(i)).$$
Given an $L$-formula $\Phi(\bar w_{\sigma_1},\dots,\bar w_{\sigma_r})$, the Boolean value is defined as 
$$[[\Phi(\bar f_{\sigma_1},\dots,\bar f_{\sigma_r})]]=
\{i: M_i \models \Phi(\bar f_{\sigma_1}(i),\dots,\bar f_{\sigma_r}(i))\}.$$

The interpretation of a relation symbol $R$ of sort 
$$\sigma_1\times \dots \times \sigma_r$$
is given by 
$$R^{(\Pi)}(\bar f_{\sigma_1},\dots,\bar f_{\sigma_r}) \Leftrightarrow 
[[R(\bar f_{\sigma_1},\dots,\bar f_{\sigma_r})]]=1.$$
This defines a product $L$-structure on $\Pi$, extending with the 1-sorted version.

We will write $z_1,\dots,z_j,\dots$ for variables of the language $\cL$. 

We define new relations on the product $\Pi$. Let $\Psi(z_1,\dots,z_m)$ be an $\mathcal{L}$-formula, and $\Phi_1,\dots,\Phi_m$ be $L$-formulas in a common set of 
variables $\bar x_{\sigma_1},\dots,\bar x_{\sigma_s}$ of sorts $\sigma_1,\dots,\sigma_s$ respectively. We define the relation 
$$\Psi \circ < \Phi_1,\dots,\Phi_m>$$ as 
$$\Pi\models \Psi \circ<\Phi_1,\dots, \Phi_m>(\bar f_{\sigma_1},\dots,\bar f_{\sigma_s}) \Leftrightarrow$$ 
$$\B_{\mathcal{L}}\models \Psi([[\Phi_1(\bar f_{\sigma_1},\dots,\bar f_{\sigma_s}),\dots,
[[\Phi_r(\bar f_{\sigma_1},\dots,\bar f_{\sigma_s})]]).$$
We extend $L$ by adding a new relation symbol, of appropriate arity, for each of the above. In this way we get 
$L(\B_{\mathcal{L}})$, and $\Pi$ has been given an $L(\B_{\mathcal{L}})$-structure. This is a generalization to the 
many-sorted case of the language of generalized products in \cite{FV}.

Suppose $M$ and $N$ are $L$-structures. We put $N_{\sigma}=Sort_{\sigma}(N)$ for every sort $\sigma$. 
An $L$-morphism $F:N\rightarrow M$ is by definition a collection of maps 
$$F_{\sigma}: N_{\sigma} \rightarrow M_{\sigma},$$
where $\sigma$ ranges over the sorts, such that for any relation symbol $R$ of sort 
$$\sigma_1\times \dots \times \sigma_k$$
we have, 
$$N_{\sigma_1}\times \dots \times N_{\sigma_k} \models R(\bar f_1,\dots,\bar f_k)\Leftrightarrow 
M_{\sigma_1}\times \dots \times M_{\sigma_k} \models R(F_{\sigma_1}(\bar f_1),\dots,F_{\sigma_k}(\bar f_k)),$$
and for any function symbol $G$ of sort
$$\sigma_1\times \dots \times \sigma_k \rightarrow \sigma$$
we have 
$$G(F_{\sigma_1}(\bar f_1),\dots,F_{\sigma_k}(\bar f_k))=F_{\sigma}(G(\bar f_1,\dots,\bar f_k)),$$
where $\bar f_1,\dots,\bar f_k$ denote tuples of elements of sorts $\sigma_1,\dots,\sigma_k$ respectively.

We remark that our convention that we have equality as a binary relation on each sort forces each 
$F_{\sigma}$ to be injective.

If each $N_{\sigma} \subseteq M_{\sigma}$, and the identity maps are $L$-morphisms, then we say 
{\it $N$ is an $L$-substructure of $M$}.
%A {\it substructure} $N$ of $M$ involves firstly 
%a collection of substructures of 
%$Sort_{\sigma}(M)$, say $N_{\sigma}$ for every sort $\sigma$. We have 
%$$N_{\sigma}=Sort_{\sigma}(N)$$ 
%for every $\sigma$. Note that if $F$ is a function 
%symbol of sort $\sigma \rightarrow \tau$, then for every $a\in Sort_{\sigma}(N)$, we must have 
%$$F(a)\in Sort_{\tau}(N).$$

We now define a many-sorted generalization of Feferman-Vaught's notion of a weak product of structures.  
Assume that for each sort $\sigma$ we have a formula $\Phi_{\sigma}(x_{\sigma})$ in a 
single free variable $x_{\sigma}$ of 
sort $\sigma$, and assume that for each $\sigma$ for all $i$ the sets 
$$S_{\sigma,i}=\{x\in Sort_{\sigma}(M_i): M_i\models \Phi_{\sigma}(x)\}$$
are $L$-substructures of $M_i$. In particular, for any function symbol $F$ of sort 
$$\sigma \rightarrow \tau$$
and any $a\in S_{\sigma}(i)$ we have that 
$$F(a)\in S_{\tau}(i),$$
for all $i$.

We assume that $\frak{L}$ contains a predicate $Fin(x)$ for the finite sets. 

We define $\Pi^{(\Phi_{\sigma})}$ (also denoted $\prod_{i\in I}^{(\Phi_{\sigma})} M_i$), as the 
$L(\B_{\mathcal{L}})$-substructure of $\Pi$ whose sort $\sigma$ consists of the 
$f_{\sigma}\in \prod_{i\in I} S_{\sigma}(M_i)$ 
such that 
$$Fin([[\neg \Phi_{\sigma}(f_{\sigma})]])$$
holds. We call it the restricted product of $M_i$ with respect to the formulas $\Phi_{\sigma}(x)$. 

$\Pi^{(\Phi_{\sigma})}$ is $L$-sorted, namely, given $\sigma$, a sort of $L$, 
the $\sigma$-sort of $\Pi^{(\Phi_{\sigma})}$ is the set of all $f_{\sigma}\in \prod_{i\in I} S_{\sigma}(M_i)$ such that 
$$Fin([[\neg \Phi_{\sigma}(f_{\sigma})]])$$
holds. 

Note that if $F$ is a function symbol of sort 
$$\sigma \rightarrow \tau,$$
and $a$ is in the $\sigma$-sort of $\Pi^{(\Phi_{\sigma})}$, then since the sets $S_{\sigma}(i)$ are 
$L$-substructures of $M_i$ for all 
$i$, we see that 
$$Fin([[\neg \Phi_{\tau}(F(f_{\sigma}))]])$$
holds. Hence $F(a)$ is in $Sort_{\tau}(\Pi^{(\Phi_{\sigma})})$. 
This shows that $\Pi^{(\Phi_{\sigma})}$ is a substructure of $\Pi$. It is $L(\B_{\mathcal{L}})$-definable.

The following result gives a quantifier-elimination for restricted products in the language 
$L(\B_{\mathcal{L}})$.

\begin{thm}[\cite{DM-supp}]\label{restricted-qe} 
Let $L$ and $\mathcal{L}$ be as before. 
For any $L(\B_{\mathcal{L}})$-formula $\Psi(x_1,\dots,x_n)$, one can effectively construct 
$L$-formulas $\Psi_1(x_1,\dots,x_n),\dots,\Psi_m(x_1,\dots,x_n)$ 
and an $\mathcal{L}$-formula $\Theta(X_1,\dots,X_m)$ 
such that given any family $\{M_i: i\in I\}$ of $L$-structures 
and any Boolean ${\mathcal{L}}$-structure on $Powerset(I)$ denoted $\B_{\mathcal{L}}$, and any 
formula $\Phi(x)$ from $L$, for any $a_1,\dots,a_n\in \prod^{(\Phi)}_{i\in I} M_i$ we have, 
$$\prod^{(\Phi)}_{i\in I} M_i\models \Psi(a_1,\dots,a_n)$$ if and only if 
$$\B_{\mathcal{L}}\models \Theta([[\Psi_1(a_1,\dots,a_n)]],\dots,[[\Psi_m(a_1,\dots,a_n)]]).$$
\end{thm}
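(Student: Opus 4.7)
The plan is to proceed by induction on the complexity of the $L(\B_{\mathcal{L}})$-formula $\Psi$, producing at each step the list $\Psi_1,\dots,\Psi_m$ of $L$-formulas and the $\mathcal{L}$-formula $\Theta$ effectively (that is, syntactically from $\Psi$). The overall shape mirrors Feferman--Vaught's classical argument for generalised products, adapted to the many-sorted restricted setting of $\Pi^{(\Phi)}$.

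The base cases are essentially tautological. For an $L$-atomic formula $R(\bar f_{\sigma_1},\dots,\bar f_{\sigma_r})$, the componentwise product interpretation gives $\Pi^{(\Phi)} \models R(\bar f) \iff \B_{\mathcal{L}} \models [[R(\bar f)]] = 1$, so one takes $\Psi_1 = R$ and $\Theta(X_1) \equiv (X_1 = 1)$. The new atomic relations $\Psi \circ \langle \Phi_1,\dots,\Phi_m\rangle$ added in passing from $L$ to $L(\B_{\mathcal{L}})$ are by definition equivalent to $\B_{\mathcal{L}} \models \Psi([[\Phi_1(\bar f)]],\dots,[[\Phi_m(\bar f)]])$, so the reduction is literal. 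Boolean connectives are handled by merging the two lists of $\Psi_j$'s and combining the corresponding $\Theta$'s via $\wedge$ or $\neg$.

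The core case is the existential quantifier. Suppose $\Psi(\bar x) = \exists y\,\Psi'(y,\bar x)$, and by induction $\Psi'$ reduces to $\Theta'([[\Psi'_1(y,\bar x)]],\dots,[[\Psi'_m(y,\bar x)]])$. Adjoin $\Phi(y)$ to the list as $\Psi'_1$. For each subset $\epsilon \subseteq \{1,\dots,m\}$ form the atom $\chi_\epsilon(y,\bar x) = \bigwedge_{j\in\epsilon}\Psi'_j(y,\bar x) \wedge \bigwedge_{j\notin\epsilon}\neg\Psi'_j(y,\bar x)$ and let $\psi_\epsilon(\bar x) = \exists y\,\chi_\epsilon(y,\bar x)$; this is an $L$-formula in $\bar x$. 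For any fixed $y \in \Pi^{(\Phi)}$ the Boolean values $Q_\epsilon := [[\chi_\epsilon(y,\bar x)]]$ form a partition of $I$ satisfying $Q_\epsilon \leq [[\psi_\epsilon(\bar x)]]$, $[[\Psi'_j(y,\bar x)]] = \bigvee_{\epsilon \ni j} Q_\epsilon$, and $Fin(\bigvee_{\epsilon \not\ni 1} Q_\epsilon)$. Conversely, given any partition $(Q_\epsilon)$ of $I$ meeting these Boolean-algebra conditions, one manufactures a witness $y \in \Pi^{(\Phi)}$ by choosing $y(i)$ at each $i \in Q_\epsilon$ to realise $\chi_\epsilon(y,\bar x(i))$ in $M_i$. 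So $\Psi(\bar x)$ is equivalent to: there is a partition $(Q_\epsilon)_\epsilon$ of $I$ with $Q_\epsilon \leq [[\psi_\epsilon(\bar x)]]$, $Fin(\bigvee_{\epsilon \not\ni 1} Q_\epsilon)$, and $\Theta'(\bigvee_{\epsilon \ni 1}Q_\epsilon,\dots,\bigvee_{\epsilon \ni m}Q_\epsilon)$. Because $\mathcal{L}$ extends $\mathcal{L}_0$ and contains $Fin$, this entire statement is an $\mathcal{L}$-formula in the Boolean values $[[\psi_\epsilon(\bar x)]]$, giving the required reduction with new $L$-formulas $\psi_\epsilon(\bar x)$.

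The main technical obstacle is the local-to-global half of the quantifier case: converting a compatible family of local witnesses into a single element of $\Pi^{(\Phi)}$. This uses the axiom of choice at each coordinate together with the Boolean condition $Fin(\bigvee_{\epsilon \not\ni 1} Q_\epsilon)$, which is precisely the statement that the coordinates where $\Phi(y(i))$ fails form a finite set, so $y$ lies in the restricted product rather than merely in the direct product. A secondary subtlety is uniformity: the constructed $\Psi_j$ and $\Theta$ must depend only on $\Psi$, not on the family $\{M_i\}$, on $\Phi$, or on $I$; this is automatic from the syntactic nature of the induction, but the many-sorted setting forces one to track the sort of each quantified variable and to exploit the hypothesis that each $S_{\sigma,i}$ is an $L$-substructure so that function-symbol values stay within the appropriate sort.
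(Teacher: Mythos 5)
This theorem is cited from \cite{DM-supp} and the present paper does not reproduce its proof; your argument is the expected one, the classical Feferman--Vaught induction on formula complexity with the partition-of-unity existential step, adapted to restricted products by adjoining the restriction formula $\Phi$ to the list $\Psi'_j$ and imposing the $Fin$ condition on the part of the partition where $\Phi$ fails. The key points are all in place: partitioning $I$ by the $\chi_\epsilon$-type of the witness at each coordinate, using $Q_\epsilon\le[[\psi_\epsilon(\bar x)]]$ together with choice to reassemble a witness in the converse direction, using $Fin(\bigvee_{\epsilon\not\ni 1}Q_\epsilon)$ to ensure the reassembled witness lands in $\Pi^{(\Phi)}$, and noting that effectiveness and independence of $\{M_i\}$, $\Phi$, and $I$ are automatic from the syntactic nature of the construction.
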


Note that if the index set is finite we may refine this somewhat, giving a many-sorted version of 
results of Mostowski which predate \cite{FV}. 
This will be useful to us when we write the adeles as a product of the finite adeles and a 
finite product of archimedean completions.

\begin{thm}\label{fv-fin-prod} Consider a finite  index set $I=\{1,\dots,s\}$. 
Let $\psi(x_1,\dots,x_n)$ be an $\cL$-formula. Then there are finitely many 
$s$-tuples of formulas 
$$(\psi_1(x_1,\dots,x_n),\dots,\psi_t(x_1,\dots,x_n))$$
for some $t\in \N$, 
and elements $S_1,\dots,S_k$, for some $k\in \N$, where each $S_j$
is in $Powerset(I)^{t}$, 
such that for arbitrary $\cL$-structures $M_1,\dots,M_s$, and any 
$f_1,\dots,f_n$ in $M_1\times \dots \times M_s$
$$M_1\times \dots \times M_s\models \psi(f_1,\dots,f_n)$$
if and only if for some $j$ the sequence 
$$[[\psi_1(f_1,\dots,f_n)]],\dots,[[\psi_t(f_1,\dots,f_n)]]$$ is equal to $S_j$.

\end{thm}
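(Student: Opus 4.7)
The plan is to induct on the complexity of $\psi$, maintaining as inductive data a finite list of $L$-formulas $(\psi_l)_{l=1}^t$ in the same free variables as $\psi$ together with a finite set $\{S_1,\ldots,S_k\}\subseteq \mathrm{Powerset}(I)^t$ for which the asserted equivalence holds for every family $M_1,\ldots,M_s$ and every assignment $\bar f$. That $I$ is finite, so $\mathrm{Powerset}(I)$ has only $2^s$ elements, makes the ``finitely many allowed Boolean-value tuples'' clause automatic at every stage of the induction.

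At the base, for an ordinary $L$-atomic formula $\phi(\bar x)$, the product structure gives $\Pi=M_1\times\cdots\times M_s\models\phi(\bar f)$ iff $\phi$ holds coordinatewise, i.e.\ iff $[[\phi(\bar f)]]=I$; take $\psi_1:=\phi$ and $S_1:=(I)$. For an atomic formula of the enlarged language $L(\mathcal{B}_{\mathcal{L}})$ of the form $\Psi\circ\langle\Phi_1,\ldots,\Phi_m\rangle$, the definition in Section~\ref{sec-rest} unwinds directly to $\mathcal{B}_{\mathcal{L}}\models\Psi([[\Phi_1(\bar f)]],\ldots,[[\Phi_m(\bar f)]])$; take $\psi_l:=\Phi_l$ and let $S_1,\ldots,S_k$ enumerate the (finite) set of $m$-tuples in $\mathrm{Powerset}(I)^m$ satisfying $\Psi$ in $\mathcal{B}_{\mathcal{L}}$. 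For Boolean combinations, $\neg\psi$ keeps the formula list and replaces $\{S_j\}$ by its complement in $\mathrm{Powerset}(I)^t$; $\psi\wedge\psi'$ concatenates the two formula lists and takes as targets all tuples obtained by concatenating some $S_j$ with some $S'_{j'}$.

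The one substantive step is the existential quantifier. Suppose $\psi(x,\bar y)$ has been reduced to formulas $(\psi_l(x,\bar y))_{l=1}^t$ with target tuples $S_j=(A_{j,1},\ldots,A_{j,t})$, $j=1,\ldots,k$. For each $U\subseteq\{1,\ldots,t\}$ set
\[ \eta_U(\bar y) \;:\equiv\; \exists x\,\Bigl[\bigwedge_{l\in U}\psi_l(x,\bar y)\wedge\bigwedge_{l\notin U}\neg\psi_l(x,\bar y)\Bigr], \]
and, for each $j$, partition $I$ into the cells $I_{j,U}:=\bigcap_{l\in U}A_{j,l}\cap\bigcap_{l\notin U}(I\setminus A_{j,l})$. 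A witness $g\in\Pi$ achieving $([[\psi_l(g,\bar f)]])_l=S_j$ exists iff, for every $U$ and every $i\in I_{j,U}$, one can pick $g(i)\in M_i$ realising precisely the truth pattern $U$ on $(\psi_l(\cdot,\bar f(i)))_l$; coordinate-independence in the product makes this equivalent to $M_i\models\eta_U(\bar f(i))$ for each $i\in I_{j,U}$, i.e.\ $I_{j,U}\subseteq [[\eta_U(\bar f)]]$ for every $U$. Hence $\Pi\models\exists x\,\psi(x,\bar f)$ iff some $j$ satisfies $I_{j,U}\subseteq [[\eta_U(\bar f)]]$ for all $U$. The new formula list is $(\eta_U)_{U\subseteq\{1,\ldots,t\}}$ and the new target set is the (finite) collection of $(B_U)_U\in\mathrm{Powerset}(I)^{2^t}$ for which some $j$ satisfies $I_{j,U}\subseteq B_U$ for every $U$.

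The main obstacle is this existential step, and it is cleared as soon as one observes that in a finite product, in contrast to the restricted-product setting of Theorem~\ref{restricted-qe}, a product element may be assembled coordinate by coordinate with no ``almost all'' constraint. Many-sortedness is a bookkeeping matter: sorts are tracked on each variable and on each Boolean value but do not affect the combinatorics above. The resulting statement is a many-sorted reformulation of Mostowski's classical finite-product theorem that, as the text notes, predates \cite{FV}.
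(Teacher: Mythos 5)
Your proposal is correct, but it takes a genuinely different route from the paper's. The paper treats Theorem~\ref{fv-fin-prod} as immediate from the machinery already quoted: apply Theorem~\ref{restricted-qe} with the restricting formula trivially true (so the finite product is the restricted product), and then note that since $I=\{1,\dots,s\}$ is finite, $Powerset(I)$ has only $2^{s}$ elements, so the Boolean formula $\Theta$ produced there, evaluated in the fixed finite structure $\B_{\mathcal{L}}$, is equivalent to the assertion that the tuple $([[\Psi_1(\bar a)]],\dots,[[\Psi_m(\bar a)]])$ lies in the finite set of tuples satisfying $\Theta$; enumerating that set gives $S_1,\dots,S_k$. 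You instead re-derive the statement from scratch by induction on formulas, which amounts to reproving the Feferman--Vaught/Mostowski argument in the finite full-product case: the atomic and Boolean steps are routine, and the substantive existential step is handled by your truth-pattern formulas $\eta_U$ together with the observation that a witness can be assembled coordinate by coordinate because there is no ``almost all'' constraint --- precisely where finiteness of the index set (full product rather than restricted product) enters, and the analogue of the key step behind Theorem~\ref{restricted-qe}. Your argument is sound; the only bookkeeping worth flagging is that the new target set at the existential step is defined from the fixed cells $I_{j,U}$ alone, hence independent of the structures and the assignment (as the uniformity in the statement requires), and that the statement's ``$\cL$-formula/$\cL$-structures'' must be read as referring to the language $L$ of the factors. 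What your route buys is a self-contained and visibly effective proof, independent of \cite{DM-supp}; what the paper's route buys is brevity and consistency with the general restricted-product theorem it has already set up, of which this is the finite specialization.
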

\begin{proof} Immediate.\end{proof}

\begin{cor}\label{mostowski} Let $A\subset M_1 \times \dots \times M_s$ be an $\cL$-definable set.
Then $A$ is a finite union of rectangles
 $ B_1 \times \dots \times B_s$ , where $B_i$ is a definable subset of $M_i$.
\end{cor}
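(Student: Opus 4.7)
The plan is to read off the rectangular decomposition directly from Theorem~\ref{fv-fin-prod}. Let $\psi(x)$ be an $\cL$-formula defining $A$, with a single free variable $x$ of the product sort (so $n=1$ in the theorem). Theorem~\ref{fv-fin-prod} produces a tuple of $\cL$-formulas $(\psi_1(x),\dots,\psi_t(x))$ and finitely many target tuples $S_1,\dots,S_k \in Powerset(I)^t$ such that, writing $f = (f(1),\dots,f(s))$, one has $f \in A$ if and only if the Boolean value tuple $([[\psi_1(f)]],\dots,[[\psi_t(f)]])$ equals some $S_j$.

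Next I would unpack this condition coordinatewise. Fix $j$ and write $S_j = (T_1,\dots,T_t)$ with each $T_l \subseteq I = \{1,\dots,s\}$. Since $[[\psi_l(f)]] = \{i \in I : M_i \models \psi_l(f(i))\}$, the equality $([[\psi_1(f)]],\dots,[[\psi_t(f)]]) = S_j$ is equivalent to the conjunction, taken over all $l$ and all $i \in I$, of the clauses $M_i \models \psi_l(f(i)) \iff i \in T_l$. This is a pure product condition: for each coordinate $i$ it only constrains $f(i)$. Define $B_i^{(j)} \subseteq M_i$ to be the set of $a \in M_i$ satisfying $\psi_l(a)$ for every $l$ with $i \in T_l$ and $\neg \psi_l(a)$ for every $l$ with $i \notin T_l$; this is $\cL$-definable in $M_i$ as a Boolean combination of the $\psi_l$. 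Then the $f$ realizing $S_j$ are exactly those in the rectangle $B_1^{(j)} \times \dots \times B_s^{(j)}$.

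Taking the union over the finitely many $j$ then yields
$$A \;=\; \bigcup_{j=1}^{k} B_1^{(j)} \times \dots \times B_s^{(j)},$$
a finite union of definable rectangles, as required. There is no real obstacle here: the entire content sits in Theorem~\ref{fv-fin-prod}, and the corollary only asks for the observation that, over a finite index set $I$, equality of tuples of subsets of $I$ amounts to finitely many membership/non-membership conditions at individual indices, which translate into coordinatewise $\cL$-definable constraints on $f$.
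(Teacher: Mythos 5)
Your argument is correct and is exactly the unpacking the paper intends: the corollary is stated there with proof ``Immediate'' from Theorem~\ref{fv-fin-prod}, and your coordinatewise translation of the tuple equality $([[\psi_1(f)]],\dots,[[\psi_t(f)]])=S_j$ into Boolean combinations of the $\psi_l$ in each factor is precisely that immediate step. (The only cosmetic remark is that if $A$ is defined with parameters one applies the theorem to the formula with the parameter variables adjoined and specializes them coordinatewise, which changes nothing in the argument.)
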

\begin{proof} Immediate.
\end{proof}

One should note that we do not claim any converse of the last corollary, in general, though
it will be true in many ring-theoretic settings.

\section{Quantifier elimination and definable sets}\label{sec-qe}

In this section we shall first prove a general quantifier elimination theorem for the adeles. We shall then give instances of the result.

Weispfenning \cite{Weisp2} already exploited the fact that the adele construction is closely connected to the generalized products of Feferman and Vaught \cite{FV}. The essential point is that $\Bbb A_K$ is built up using 

i) A family $\{K_v: v \in V_K\}$ of structures whose $L$-theory is well-understood, for some language $L$, 

ii) An enrichment $\cL$ of the Boolean algebra $Powerset(V_K)$ containing the predicates $Fin$ and $Card_n(x)$.  
Thus $Powerset(V_K)$ becomes an $\cL$-structure denoted $\Bbb P(I)^+$. 

ii) Some restriction on the product $\prod_{v \in V_K} K_v$, defined in terms of the enriched Boolean algebra $\Bbb P(I)^+$.

Here the restriction in (ii) is that $Fin(\{v: \neg \Phi_{val}(x(v))\})$ where 
$\Phi_{val}(x)$ expresses that $x\in \mathcal{O}_v$. 

\

\subsection{\it The general quantifier elimination theorem}

\

\begin{thm}\label{th-qe} Let $K$ be a number field. Let $\frak{L}$ be an extension of the language of 
Boolean algebras such that an enrichment of the Powerset, $\P(I)^{+}$, 
has quantifier elimination in $\cL$. Let $L$ be a many-sorted 
language such that the non-archimedean completions $K_v$, where $v\in V_K^f$, 
admit uniform quantifier elimination in a sort $\sigma$ relative to other sorts. Then 
for any $L$-formula $\psi(x_1,\dots,x_n)$ there exist $L$-formulas 
$$\psi_1(x_1,\dots,x_n),\dots,\psi_m(x_1,\dots,x_n)$$ 
which are quantifier-free in the sort $\sigma$, and 
a quantifier-free Boolean $\cL$-formula $\theta(X_1,\dots,X_m)$, all of which are effectively computable from $\psi_1,\dots,\psi_m$, such that for any 
$a_1,\dots,a_n\in \A_K^{fin}$,
$$\Bbb A_K^{fin}\models \Psi(a_1,\dots,a_n) \Leftrightarrow$$
$$\B_{K}\models \theta([[\psi_1(a_1,\dots,a_n)]],\dots,[[\psi_m(a_1,\dots,a_n)]])).$$
\end{thm}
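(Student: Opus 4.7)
The plan is to combine three inputs: (a) the general restricted-product transfer of Theorem \ref{restricted-qe}, which reduces truth of $L$-formulas on $\prod^{(\Phi)}_{v\in V_K^f} K_v$ to truth of an $\cL$-formula about finitely many Boolean values $[[\cdot]]$; (b) the assumed quantifier elimination of $\P(I)^+$ in $\cL$, which lets us take the outer Boolean formula to be quantifier-free; and (c) the assumed uniform quantifier elimination of the $K_v$ in the sort $\sigma$, which lets us take the inner $L$-formulas defining those Boolean values to be quantifier-free in sort $\sigma$. First I recall that $\A_K^{fin}$ is precisely the restricted product $\prod^{(\Phi_{val})}_{v\in V_K^f} K_v$, the restriction being expressed via $Fin([[\neg\Phi_{val}(x(v))]])$ as in Section \ref{sec-rest}, so the hypotheses of Theorem \ref{restricted-qe} apply with index set $I=V_K^f$ and the Boolean enrichment $\B_K$ of $\P(I)$ described in Section \ref{sec-bool}.

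Now, given $\psi(x_1,\dots,x_n)$, I apply Theorem \ref{restricted-qe} to obtain $L$-formulas $\Psi_1,\dots,\Psi_m$ and an $\cL$-formula $\Theta(X_1,\dots,X_m)$ such that for all $a_1,\dots,a_n\in \A_K^{fin}$,
\[
\A_K^{fin}\models \psi(a_1,\dots,a_n)\iff \B_K\models \Theta([[\Psi_1(a_1,\dots,a_n)]],\dots,[[\Psi_m(a_1,\dots,a_n)]]).
\]
By the hypothesis that $\P(I)^+$ has quantifier elimination in $\cL$, I replace $\Theta$ by a quantifier-free $\cL$-formula $\theta(X_1,\dots,X_m)$ equivalent to it in every such enrichment. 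Finally, by the hypothesis of uniform quantifier elimination of the $K_v$ (for $v\in V_K^f$) in the sort $\sigma$ relative to the other sorts, each $\Psi_i$ is uniformly equivalent in every $K_v$ to an $L$-formula $\psi_i$ which is quantifier-free in sort $\sigma$. Since the Boolean values $[[\Psi_i(a_1,\dots,a_n)]]=\{v:K_v\models \Psi_i(a_1(v),\dots,a_n(v))\}$ are computed pointwise, the uniform equivalence $K_v\models \Psi_i\leftrightarrow \psi_i$ for every $v$ yields $[[\Psi_i(\bar a)]]=[[\psi_i(\bar a)]]$ in $\B_K$. Substituting gives the desired equivalence
\[
\A_K^{fin}\models \psi(a_1,\dots,a_n)\iff \B_K\models \theta([[\psi_1(a_1,\dots,a_n)]],\dots,[[\psi_m(a_1,\dots,a_n)]]).
\]

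The only places where effectivity matters are in the three ingredients themselves: the Feferman--Vaught style reduction of Theorem \ref{restricted-qe} is effective by construction, the QE procedure in $\cL$ is effective whenever we apply it in the concrete enrichments of interest (the cases of Theorems \ref{bool1} and \ref{bool2}), and uniform QE for the $K_v$ is effective in the relevant examples (e.g.\ Denef--Pas style languages on the valued field sort). Chaining these effective procedures produces $\psi_1,\dots,\psi_m$ and $\theta$ effectively from $\psi$.

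The most delicate point, and where care is required rather than genuine difficulty, is step (c): one must check that ``uniform QE in sort $\sigma$'' really gives a single $\psi_i$ equivalent to $\Psi_i$ across all non-archimedean completions $K_v$ simultaneously, so that the pointwise-defined Boolean values coincide on the nose, not merely model-by-model. This is exactly the content of ``uniform'' in the hypothesis, and it is what rules out having to enrich $\cL$ further to track a $v$-dependent choice of quantifier-free equivalent. Granted that, the proof is the straightforward composition outlined above.
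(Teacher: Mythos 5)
Your proof is correct and follows essentially the same route as the paper's: reduce via Theorem \ref{restricted-qe} to a Boolean statement about Boolean values, then invoke QE for $\P(I)^+$ in $\cL$ to make the outer formula $\theta$ quantifier-free, and invoke uniform QE for the $K_v$ in sort $\sigma$ to replace each $\Psi_i$ by a $\psi_i$ quantifier-free in $\sigma$ with the same pointwise Boolean value. In fact you are a bit more careful than the paper's own (very terse) proof, which only explicitly mentions the first two ingredients and defers the third; you also correctly flag the point that uniformity is what guarantees $[[\Psi_i(\bar a)]]=[[\psi_i(\bar a)]]$ as sets, which is exactly the step spelled out in the paper's proof of the subsequent Theorem \ref{th-def-sets}.
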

\begin{proof} Since $\A_{K}^{fin}$ is the restricted direct product of $K_v$, $v\in V_K^f$, with respect to the formula 
$\Phi_{val}(x)$, we may apply Theorem \ref{restricted-qe} and get formulas 
$$\psi'_1(x_1,\dots,x_n),\dots,\psi'_m(x_1,\dots,x_n)$$ and a Boolean formula $\theta(X_1,\dots,X_m)$ 
such that for any $a_1,\dots,a_n\in \A_K^{fin}$,

$$ \A_K^{fin}\models \Psi(a_1,\dots,a_n)$$ if and only if 
$$\Bbb P(I)^+ \models \theta([[\psi'_1(a_1,\dots,a_n)]],\dots,
[[\psi'_m(a_1,\dots,a_n)]]).$$
Now use the quantifier elimination in $\mathcal{L}$.
\end{proof}

%In Theorem \ref{th-qe}, 
%the number field $K$ is fixed and the quantifier elimination procedure depends on $K$. 
%The question of whether one can vary $K$ is dealt with in Section \ref{sec-unif}. In the proof, the formulas 
%$\Psi_j(\bar x)$ and $\Theta(\bar w)$ do not depend on $K$. 
%Indeed, starting from $\Psi(\bar x)$, by 
%induction this is true for $\Theta'(\bar w)$ and $\Psi'_1(\bar x),\dots,\Psi_{k'}(\bar x)$
%and for the equivalence of \eqref{5} and \eqref{6}. The definition of valuation ring is uniform.  
%By Theorem \ref{th-fv} $\Theta(\bar w)$, 
%$\Psi_1(\bar x),\dots,\Psi_k(\bar x)$, and the equivalence of \eqref{8} and \eqref{9} are 
%uniform for all number fields since Theorem \ref{th-fv} holds uniformly for all 
%indexed families of structures $\mathcal{K}$.

We now give examples of languages in which $K_v$, for all non-archimedean $v$, have uniform quantifier elimination 
in a certain sort. Each of these languages can be used in Theorem \ref{th-qe} in place of $L$. 

\

i) {\it Enrichments of the ring language and the Macintyre language}

\

This is a one-sorted language. Expand the language of rings $\cL_{rings}$ by the Macintyre power 
predicates $P_n(x)$ expressing that $x$ is an $n$th 
power, for all $n$. For any $n$, add constants to be interpreted in $K_v$ as coset representatives of the group of non-zero $n$th powers $(K_v^*)^n$ in $K_v$. 
Note that the index of $(K_v^*)^n$ in $K_v$ is bounded independently of $v$, 
so we have a finite set of constants independently of $v$. 
Add constants for an $\F_p$-basis of $\mathcal{O}_{K_v}/(p)$, for all $p$ (the number of such basis is the 
$p$-rank in the sense of 
Prestel-Roquette and it is bounded and depends only on the dimension of the number field $K$ over $\Q$). 
For all $m\geq 2$, add predicates $Sol_m(x_1,\dots,x_m)$ interpreted by 
$$\bigwedge_{1\leq i\leq m} v(x_i)\geq 0 \wedge \exists y (v(y)\geq 0 \wedge v(y^m+x_1y^{m-1}+\dots+x_m)>0).$$ 
Add s a symbol for uniformizer, and a binary relation $D(x,y)$ defined by $v(x)\leq v(y)$. 

This language was defined by Belair \cite{Belair}, where he proved that the fields $\Q_p$, for all $p$, 
admit uniform elimination of quantifiers. 
A related many sorted language with sorts for the field and residue rings $R_k:=\mathcal{O}_K/\cM_K^k$ with the ring language, symbols for the reside maps into residue rings, where ($k\geq 1$), (note that $R_1$ is the residue field), and a sort for the value group with the 
language of ordered abelian groups, was defined by Weispfenning \cite{Weisp2}, where he proved a relative quantifier elimination for the field sort relative to residue rings $R_k$ and value group.

In these languages, the non-archimedean completions $K_v$ of a fixed number field $K$, where $v\in V_K^f$, have  elimination of quantifiers for almost all 
$v$ (namely of norm larger than some constant) for the field sort relative to the other sorts (in the many-sorted case). In the case of $K=\Q$, all $\Q_p$ have uniform quantifier elimination in Belair's language (cf. \cite{Belair}). Belair's results can also be 
deduced from the result of Weispfenning's \cite{Weisp2} cited above as follows: 
first get a quantifier elimination for the field sort 
relative to the residue ring sorts $R_K$. Then replace the formulas for the residue ring sorts in terms of the residue field for all $K_v$ which are unramified (which is the case for all but finitely many $v$), then one 
uses quantifier elimination in the residue fields $k_v$ for all but finitely many $v$ due to Kiefe \cite{kiefe} in the extension of the ring language with the predicates $Sol_k$. Then an argument of 
Belair \cite{Belair} extends this quantifer elimination to all $\Q_p$. Note that in this case there are coset representatives for $(\Q_p^*)^n$ which are integers for all $n$, so we do not need to name them by constants.

\

ii) {\it The language of Basarab-Kuhlmann}

\

This language has sorts $(K,K^*/1+\cM^n,\cO_K/\cM^m,\Gamma,v,\pi_n,\pi_n^*)$, for all $n,m\geq 1$, with the language of groups $\{.,^{-1},1\}$ for 
the sort $K^*/1+\cM^n$, the language of rings for the sorts $\cO_K/\cM^m$, the language of ordered abelian groups for the value group $\Gamma$, with symbols for the valuation and canonical projection maps $\pi_n: \cO_K \rightarrow \cO_K/\cM^m$ and $\pi_n^*: K^* \rightarrow K^*/1+\cM^n$ 
from the field sort into the other sorts. Equivalently, one can replace the maximal ideal $\cM$ by the ideal generated by $p$ (which differ if the ramification index is greater than $1$). 
By a theorem of Basarab \cite{basarab} and Kuhlmann \cite{kuhlmann}, the completions $K_v$ for almost all $v$, have uniform quantifier elimination in the field sort relative to other sorts for a given number field $K$.

\

iii) {\it The adelic Denef-Pas language with a product angular component map and product valuation}

\

The Denef-Pas language $\cL_{PD}$ has three sorts $(K,k,\Gamma,ac,v)$ with the ring language for the field $K$ and the residue field $k$, the language of ordered abelian groups for the value group $\Gamma$, and symbols for the valuation and the 
angular component map modulo $\cM$ defined by $ac(x)=res(x\pi^{-v(x)})$, where $\pi$ is a uniformizer and $res$ the residue map into the residue field $k$ if $x\neq 0$, and $ac(0)=0$. $ac$ is defined up to a choice of $\pi$ and is not definable, but can be defined from a cross section (a section to the valuation map), and exists in local fields and saturated valued fields. It is axiomatized on non-zero elements as being multiplicative and coinciding with the residue map on units. 
By a theorem of Pas \cite{pas}, given $K$, the completions $K_v$, for almost all $v$, have uniform quantifier elimination in $\cL_{PD}$.

Using $\cL_{PD}$ we get a language for the finite adeles $\A_K^{fin}$ as in Section \ref{sec-rest}. This language, that we call adelic Denef-Pas, has three sorts
$$(\A_K^{fin},\prod_{v\in V_K} \Gamma_v,\prod_{v\in V_K} k_v,ac^*,v^*),$$ with the ring language for 
$\A_K^{fin}$, the language of lattice-ordered abelian groups for the lattice ordered group $\prod_{v\in VK} \Gamma_v$ which is defined as the direct product of the value groups $\Gamma_v$ (which are all $\Z$), and the language of rings for the direct 
product of the residue fields $k_v$ (of $K_v$). The adelic angular component $ac^*$ and product valuation both defined on $\A_K^{fin}$ are defined by 
$ac^*(a)=(ac(a(v)))_v$ and $v^*(a)=(v(a(v))_v$, for $a\in \A_K$. In \cite{DM-supp}, we studied the model theory of the adeles with the product valuation map. The results hold also for the adelic Denef-Pas language. 

\

By Theorem \ref{th-qe}, there is a quantifier elimination for the finite adeles $\A_K^{fin}$ and the adeles $\A_K$ taking $L$ to be 
each of the above languages, and $\cL$ appropriately chosen as in the theorem.

\begin{thm}\label{th-def-sets} Let $X$ be a definable subset of 
$(\Bbb A_K^{fin})^m$, in the ring language, where $m\geq 1$. Then $X$ is a finite Boolean 
combination of sets of the following types:
\begin{enumerate}
%\item (Type I) $\{(x_1,\dots,x_m)\in \Bbb A_K^m: ([[\phi(x_1,\dots,x_n)]]=1)\}$
\item (Type I) $\{(x_1,\dots,x_m)\in \Bbb A_K^m: C_j([[\varphi(x_1,\dots x_n)]])\}$, where $j\geq 1$, 
\item (Type II) $\{(x_1,\dots,x_m)\in \Bbb A_K^m: Fin([[\psi(x_1,\dots,x_n)]])\}$.
\end{enumerate} where $\varphi(\bar x)$ and $\psi(\bar x)$ can be chosen to be formulas that are quantifier free in a sort $\sigma$ from any language $L$ such that the completions $K_v$ have uniform quantifier elimination in sort $\sigma$ of $L$ relative to other sorts. In case $L$ is one-sorted, the fomulas $\theta,\psi$ can be chosen to be quantifier free in $L$. This holds for the languages of Belair, Weispfenning, Basarab-Kuhlmann, and adelic Denef-Pas.
\end{thm}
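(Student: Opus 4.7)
The plan is to apply Theorem \ref{th-qe} with $\mathcal{L} = \mathfrak{L}^{fin}$. Since $\P(V_K^f)^{+}$ is an infinite atomic Boolean algebra, Theorem \ref{bool1} guarantees quantifier elimination for it in $\mathfrak{L}^{fin}$, so the hypothesis of Theorem \ref{th-qe} is met. Given a ring-language formula $\Psi(\bar x)$ defining $X$, Theorem \ref{th-qe} produces $L$-formulas $\psi_1,\dots,\psi_m$ that are quantifier-free in the distinguished sort $\sigma$, together with a quantifier-free Boolean $\mathfrak{L}^{fin}$-formula $\theta(X_1,\dots,X_m)$, such that
$$\A_K^{fin}\models \Psi(\bar a) \iff \B_K^f\models \theta\bigl([[\psi_1(\bar a)]],\dots,[[\psi_m(\bar a)]]\bigr).$$

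After putting $\theta$ in disjunctive normal form, it suffices to classify each atomic $\mathfrak{L}^{fin}$-subformula. There are three kinds: Boolean equations $t_1(X)=t_2(X)$, unary $C_j(t(X))$, and unary $Fin(t(X))$, where $t,t_i$ are Boolean terms in $X_1,\dots,X_m$. The central observation is that the map $\chi\mapsto [[\chi]]$ is a Boolean homomorphism from $L$-formulas (modulo provable equivalence) into $\B_K^f$: one has $[[\chi_1\wedge \chi_2]] = [[\chi_1]]\wedge [[\chi_2]]$, $[[\chi_1\vee \chi_2]] = [[\chi_1]]\vee [[\chi_2]]$, and $\overline{[[\chi]]}=[[\neg \chi]]$. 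Consequently, any Boolean term $t(X_1,\dots,X_m)$ evaluated at $([[\psi_1]],\dots,[[\psi_m]])$ is equal to $[[t^{*}(\psi_1,\dots,\psi_m)]]$, where $t^{*}$ is the propositional combination of the $\psi_i$ obtained by replacing $\wedge,\vee,\bar{\;}$ with $\wedge,\vee,\neg$. Quantifier-freeness in sort $\sigma$ is preserved under these propositional combinations.

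Now I would route each atomic type into Types I or II. An equation $t_1=t_2$ is equivalent to $(t_1\wedge \bar{t_2})\vee (\bar{t_1}\wedge t_2)=0$, which, via the homomorphism, translates into $\neg C_1([[\chi^{*}]])$ for an appropriate $\chi^{*}$; that is, the negation of a Type I set. An atomic $C_j(t)$ becomes $C_j([[t^{*}]])$, a Type I set, and $Fin(t)$ becomes $Fin([[t^{*}]])$, a Type II set. Taking Boolean combinations as dictated by the disjunctive normal form of $\theta$ presents $X$ as a finite Boolean combination of Type I and Type II sets. The language-specific assertions for the enriched ring/Macintyre language of Belair, the many-sorted languages of Weispfenning and Basarab--Kuhlmann, and the adelic Denef--Pas language follow by plugging the corresponding uniform quantifier elimination results for $K_v$ (recalled just before the theorem) into Theorem \ref{th-qe}; in the one-sorted case ``quantifier-free in sort $\sigma$'' simply means quantifier-free.

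The only point requiring care, rather than a genuine obstacle, is that for some of these languages the uniform quantifier elimination for $K_v$ may fail at a finite exceptional set of places (e.g.\ small residue characteristic). Since such an exceptional set is controlled by a $Fin$-guard, it is absorbed into a Type II component and poses no threat to the classification. Thus the desired Boolean combination description holds uniformly in $K$.
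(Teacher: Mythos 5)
Your proposal is correct and follows essentially the same route as the paper: apply Theorem \ref{th-qe} with $\mathcal{L}=\mathfrak{L}^{fin}$, invoke Theorem \ref{bool1} to render the Boolean formula $\theta$ quantifier-free, and then read off the atomic pieces as Type I and Type II sets, with the uniform local quantifier elimination supplying quantifier-freeness of the $\psi_i$ in sort $\sigma$. Your explicit treatment of Boolean-term equations via $t_1=t_2 \Leftrightarrow \neg C_1$ of the symmetric difference, and of pushing Boolean terms inside $[[\cdot]]$ using its homomorphism property, only spells out steps the paper leaves implicit (the former appears in the paper's remark following the theorem), so no substantive difference remains.
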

\begin{proof} We shall use the notation in Theorem \ref{th-qe}. Suppose that $X$ is defined by a formula $\phi(\bar x)$. Applying Theorem \ref{th-qe} with $\mathcal{L}$ the language $\mathcal{L}^{fin}$ in Section \ref{sec-bool} and $\Bbb P(I)^+$ the corresponding enrichment of the power set Boolean algebra to $\cL$ (i.e. with predicates $C_j$ and $Fin$), we get $L$-formulas $\psi_1,\dots,\psi_m$ and an $\cL$-formula $\theta(X_1,\dots,X_m)$ 
such that for any 
$a_1,\dots,a_n\in \A_K^{fin}$,
$$\Bbb A_K^{fin}\models \Psi(a_1,\dots,a_n) \Leftrightarrow$$
$$\B_{K}\models \theta([[\psi_1(a_1,\dots,a_n)]],\dots,[[\psi_m(a_1,\dots,a_n)]])).$$
Applying Theorem \ref{bool1}, we deduce that the formula $\theta(X_1,\dots,X_m)$ can be chosen to be 
quantifier free in $\cL$. It is thus a finite Boolean combination of formulas of the form 
$$C_j(\beta(X_1,\dots,X_m))$$ and 
$$Fin(\gamma(X_1,\dots,X_m)),$$
where $\beta$ and $\gamma$ 
are terms $\cL$ not involving $C_j$ and $Fin$. Now using the correspondence between subsets of the set of 
valuations and idempotents in $\A_K^{fin}$ (see Section \ref{sec-id}) we deduce that 
for $a_1,\dots,a_n\in \Bbb A_K^{fin}$, 
$$\Bbb P(I)^+\models \theta([[\psi_1(\bar a)]],\dots,[[\psi_m(\bar a)]])$$ 
if and only if
$$\B_K\models \theta([[\psi_1(\bar a)]],\dots,[[\psi_m(\bar a)]]),$$
if and only if
$$\Bbb A_K^{fin}\models \theta([[\psi_1(\bar a)]],\dots,[[\psi_m(\bar a)]]).$$
Now we apply the quantifier elimination in $L$ that is uniform in $v$ to deduce that each Boolean value 
$[[\psi_j(\bar x)]]$ equals the Boolean value $[[\psi'(\bar x)]]$ where $\psi'(\bar x)$ is quantifier-free (in relevant sort $\sigma)$.
This completes the proof.
\end{proof}

\begin{remark} Note that sets of the form 
$$\{(x_1,\dots,x_m)\in \Bbb A_K^m: ([[\phi(x_1,\dots,x_n)]]=1)\}$$
are a special case of sets of Type I. Indeed, in an atomic Boolean algebra one has
$$\beta=0 \Leftrightarrow \neg C_1(\beta)$$
for any Boolean term $\beta(\bar X)$. These sets generalize the adele space (or adelization) of a variety $V$ 
given by a system of polynomial equations $f_{\alpha}(x_1,\dots,x_n)$ over a number field $K$ 
defined by $\{\bar x \in \A_K: f_{\alpha}(\bar x)=0\}$.

\end{remark}

\begin{cor} A definable subset $X$ of 
$\Bbb A_{K}^m$ is a finite union of rectangles $B\times W\times S$ where $B$ is a finite union of 
rectangles $\prod_{v~\mathrm{real}} B_v$, where $B_v$ a is quantifier-free definable subset of $\R^{m_v}$ in the language of ordered fields, $W$ is a finite union of rectangles 
$\prod_{v~\rm{complex}} W_v$, where $W_v$ a quantifier-free definable subset of $\C^{m_v}$ in the language of rings, 
and $S$ is definable in $\A_K^{fin}$ (hence has the form given in Theorem \ref{th-def-sets}.
\end{cor}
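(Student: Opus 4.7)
The plan is to combine (i) the finite product decomposition of $\A_K$, (ii) the many-sorted Mostowski decomposition of Corollary~\ref{mostowski}, and (iii) standard quantifier elimination for $\R$ and $\C$ together with the previous Theorem~\ref{th-def-sets}. Since $\mathrm{Arch}(K)$ is finite, with $r$ real and $s$ complex places, the idempotents $e_{\R}, e_{\C}, 1-e_{\infty}$ (definable by Section~\ref{sec-id}, with parameters for the individual archimedean minimal idempotents) yield a ring isomorphism
$$\A_K \;\cong\; \prod_{v \text{ real}} \R \;\times\; \prod_{v \text{ complex}} \C \;\times\; \A_K^{fin}.$$
Raising to the $m$-th power, $\A_K^m$ becomes a \emph{finite} ring-theoretic product of factors $\R^m$, $\C^m$ and $(\A_K^{fin})^m$; since ring operations on $\A_K$ agree with componentwise operations under this isomorphism, ring-definability in $\A_K^m$ coincides with ring-definability in the finite product.

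Next I would apply Corollary~\ref{mostowski} (with $\cL$ the language of rings) to this finite product. Any ring-definable $X \subseteq \A_K^m$ then decomposes as a finite union of rectangles
$$\prod_{v \text{ real}} B_v \;\times\; \prod_{v \text{ complex}} W_v \;\times\; T,$$
where $B_v \subseteq \R^m$, $W_v \subseteq \C^m$ and $T \subseteq (\A_K^{fin})^m$ are all ring-definable.

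It remains to upgrade each factor. For the real factors, Tarski's quantifier elimination for real closed fields in the language of ordered fields, combined with the fact that the order on $\R$ is ring-definable via $x \geq 0 \iff \exists y\,(y^2 = x)$, shows that each $B_v$ is quantifier-free definable in the language of ordered fields. For the complex factors, Tarski--Chevalley quantifier elimination for algebraically closed fields of characteristic zero gives that each $W_v$ is quantifier-free definable in the language of rings. For the finite adele factor $T$, Theorem~\ref{th-def-sets} applies directly and produces a description of the required shape.

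The only step that requires any care is the initial one: one must check that the ring-theoretic identification $\A_K \cong \R^r \times \C^s \times \A_K^{fin}$ really does translate ring-definable subsets of $\A_K^m$ into ring-definable subsets of the finite product, so that Corollary~\ref{mostowski} is applicable. This is not truly an obstacle, since the orthogonal idempotents $e_\R$, $e_\C$, $1-e_\infty$ are definable in the ring language (indeed over parameters one separates each archimedean place individually, the set being finite), and multiplication by such an idempotent exhibits each factor as a ring-definable subring of $\A_K$. Once this verification is in place, the argument is a routine assembly of Corollary~\ref{mostowski}, the two classical Tarski theorems, and Theorem~\ref{th-def-sets}.
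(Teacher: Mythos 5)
Your proof is correct and fills in exactly the argument the paper compresses to ``Immediate by Corollary~\ref{mostowski}'': split $\A_K$ via the definable archimedean idempotents into the finite product $\R^r \times \C^s \times \A_K^{fin}$, apply the Mostowski decomposition into rectangles, and then invoke Tarski (real closed fields), Tarski--Chevalley (algebraically closed fields), and Theorem~\ref{th-def-sets} on the respective factors.
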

\begin{proof} Immediate by Corollary \ref{mostowski}.\end{proof}

\section{Definable subsets of the adeles}\label{sec-def-sets}

In this section we show that definable subsets of the adeles in the ring language are measurable but not necessarily finite unions of locally closed sets 
in contrast to the case of the completions $K_v$, $v\in V_K^f$, where definable sets in $K_v^m$ are finite unions of locally closed sets in each of the languages (i)-(iii) in Section 
\ref{sec-qe}, for any $m$. This follows from quantifier elimination. This property also holds in 
the real and complex fields by Tarski's quantifier elimination (cf \cite{KK}).
%. Note that Belair's quantifier elimination \cite{belair} shows that for any formula $\psi(x_1,\dots,x_m)$ and any completion $K_v$, 
%the set defined by $\psi$ in $K_v^m$ is a finite union of locally closed sets. This holds for 

%\subsection{\it Measurability}

\begin{thm}\label{th-def-meas} The definable subsets of $\Bbb A_K^m$ in the language of rings are measurable for any $m\geq 1$.\end{thm}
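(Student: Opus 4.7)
The plan is to combine the structure theorem for definable subsets of the adeles (the corollary immediately following Theorem \ref{th-def-sets}) with classical measurability statements for each factor. That corollary writes any definable $X \subseteq \A_K^m$ as a finite union of rectangles $B \times W \times S$, where $B$ is a quantifier-free definable subset of some $\R^{k_1}$, $W$ a quantifier-free definable subset of some $\C^{k_2}$, and $S$ a definable subset of $(\A_K^{fin})^m$. Since the adelic measure is a product of the archimedean Haar measures and the restricted-product measure on $\A_K^{fin}$, and measurability is preserved under finite unions and products of measurable sets, it is enough to prove measurability of each of the three kinds of factor separately.

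The archimedean factors $B$ and $W$ are immediate: they are semialgebraic (resp.\ constructible) subsets of finite-dimensional real and complex spaces, hence finite unions of locally closed sets by Tarski's theorem, and so Borel and a fortiori Haar-measurable.

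The bulk of the work is the finite-adelic factor $S$. I would apply Theorem \ref{th-def-sets} to write $S$ as a finite Boolean combination of Type I sets $\{\bar x : C_j([[\varphi(\bar x)]])\}$ and Type II sets $\{\bar x : Fin([[\psi(\bar x)]])\}$; since Boolean combinations of measurable sets are measurable, it remains to check each type. For Type I I rewrite
$$\{\bar x \in (\A_K^{fin})^m : C_j([[\varphi(\bar x)]])\} \;=\; \bigcup_{\substack{T \subset V_K^f \\ |T| = j}} \;\bigcap_{v \in T} \pi_v^{-1}(A_v),$$
where $\pi_v : (\A_K^{fin})^m \to K_v^m$ is the canonical projection and $A_v = \{\bar y \in K_v^m : K_v \models \varphi(\bar y)\}$. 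Quantifier elimination in $K_v$ in any of the languages reviewed in Section \ref{sec-qe} shows that each $A_v$ is a finite union of locally closed sets in $K_v^m$, hence Borel; pulling back by the continuous map $\pi_v$ keeps them Borel in $(\A_K^{fin})^m$. Since $V_K^f$ is countable, the displayed union is a countable union of finite intersections of Borel sets, hence Borel. A Type II set is handled the same way by observing that $Fin([[\psi(\bar x)]])$ holds if and only if $\neg C_{N+1}([[\psi(\bar x)]])$ holds for some $N$, which presents it as a countable union of complements of Type I sets.

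The main obstacle, if any, is cleanly transferring the local measurability of $A_v \subseteq K_v^m$ to global measurability in the restricted product. This rests on two facts: the projections $\pi_v$ are continuous by the very definition of the restricted-product topology, and the index set $V_K^f$ is countable, so that only countable Boolean operations appear in the expression above. Once these points are in place, the argument is purely soft: definability yields a finite Boolean combination of Type I and Type II sets, and each building block is Borel by the countable-cylinder description.
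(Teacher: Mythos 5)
Your proof is correct and follows essentially the same strategy as the paper's: reduce to $\A_K^{fin}$ via the rectangle decomposition, appeal to Tarski for the archimedean factors, invoke Theorem \ref{th-def-sets} to reduce to Type I and Type II sets, and express each as a countable Boolean combination of Borel cylinder sets $\pi_v^{-1}(A_v)$ with $A_v$ measurable by local quantifier elimination. The only differences are cosmetic and, if anything, slightly cleaner than the paper's: you write $C_j$ directly as a countable union over $j$-element subsets $T\subseteq V_K^f$ using only positive local conditions, and you fold Type II into Type I via $Fin(\beta)\Leftrightarrow\exists N\,\neg C_{N+1}(\beta)$, whereas the paper first passes to exact-count predicates $\mathcal{C}_j$ (whose decomposition also needs negative local conditions at the complementary places) and then handles $Fin$ by a separate union over finite subsets of places.
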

\begin{proof} Since the definable sets in 
$\R$ and $\C$ are measurable (by Tarski's quantifier elimination theorems, cf.~\cite{KK}), by Corollary \ref{mostowski}, it suffices to prove that the definable subsets of 
the finite adeles $\A_K^{fin}$ are measurable. For this we apply Theorem \ref{th-def-sets}. For Type I sets, 
let us define unary predicates $\mathcal{C}_j(x)$ expressing that 
there are exactly $j$ atoms below $x$ (where $j\geq 1$). 
In an atomic Boolean algebra, the predicates $C_j$ can 
be defined in terms of the predicates $\mathcal{C}_j$. Thus in $\B_K$ one has 
$\{\bar x: C_j([[\Psi(\bar x]])\}=\bigcup_{k\geq j} 
\{\bar x: \mathcal{C}_j([[\Psi(\bar x)]])\}$. So it suffices to show that the 
sets defined by the $\mathcal{C}_j$ are measurable. To see this note that 
$$\mathcal{C}_j([[\Psi(\bar x)]])=\bigcup_{v_{i_1},\dots,v_{i_j}\in \mathrm{S(K)}} 
(\bigcap_{1\leq t\leq j} 
(\{\bar x: K_{v_{i_t}}\models \Psi(\bar x(v_{i_t}))\}$$
$$\cap \bigcap_{w\notin \{v_{i_1},\dots,v_{i_j}\}} 
\{\bar x: K_w\models \neg \Psi(\bar x(w)))),$$
where the union is over all $j$-tuples of distinct normalized non-archimedean valuations. 
By the earlier remark on Belair's quantifier elimination theorem \cite{Belair} and measurability, every definable set in  
$K_v$ is measurable. Hence $X$ is also 
measurable. 

Now consider the Type II case. We have
$$Fin([[\Psi(\bar x)]])=
\bigcup_{\mathcal{F}} (\bigcap_{v\in F} \{\bar x: K_v\models \Psi(\bar x(v))\}
\cap \bigcap_{w\neq v} \{\bar x: K_w\models \neg \Psi(\bar x(w))\}),$$
where $\mathcal{F}$ ranges over all the finite subsets of the set of normalized non-archimedean valuations $V_K^{f}$.
So arguing similarly as in the Type I case we deduce that $Fin([[\Psi(\bar x)]])$ is measurable.\end{proof}

Strengthening the Boolean language to be $\cL^{fin,res}$ we still get measurability.
\begin{thm} Let $X$ be a definable subset of $(\A_K^{fin})^m$ in the language $\cL^{fin,res}(L)$, where $L$ is any of the languages (i)-(iii) in Section \ref{sec-qe}, and $m\geq 1$. Then $X$ is measurable.\end{thm}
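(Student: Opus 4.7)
The plan is to reduce to the previous theorem by extracting the new content in $\cL^{fin,res}$, namely the predicates $Res(n,r)(x)$, and to show measurability for that one extra type of set.

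First, I would apply Theorem \ref{th-qe} with the Boolean language taken to be $\cL^{fin,res}$ and with $L$ the chosen factor language. The theorem applies because, by Theorem \ref{bool2}, the natural $\cL^{fin,res}$-enrichment $\P(I)^{+}$ of the powerset algebra has quantifier elimination. So for any defining formula $\Psi$ of $X$, there exist quantifier-free formulas $\psi_1,\dots,\psi_m$ in the appropriate sort of $L$ and an $\cL^{fin,res}$-formula $\theta(X_1,\dots,X_m)$ with
\[
\A_K^{fin}\models\Psi(a_1,\dots,a_n)\iff \B_K\models\theta([[\psi_1(\bar a)]],\dots,[[\psi_m(\bar a)]]).
\]
By Theorem \ref{bool2} we may take $\theta$ quantifier-free in $\cL^{fin,res}$. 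Hence $X$ is a finite Boolean combination of sets of three types: Type I (built from $C_j$ applied to a Boolean term in the $[[\psi_i(\bar x)]]$), Type II (built from $Fin$), and a new Type III (built from $Res(n,r)$).

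The Type I and Type II sets are measurable by Theorem \ref{th-def-meas} exactly as written there, since the argument only uses that each $\{\bar x : K_v\models\varphi(\bar x(v))\}$ is measurable in $K_v$ (via the uniform quantifier elimination in $L$). For Type III, the key observation is that, modulo the idempotents/subsets correspondence of Section \ref{sec-id}, the predicate $Res(n,r)$ applied to $[[\varphi(\bar x)]]$ just says that the set
\[
S_\varphi(\bar x)\;:=\;\{v\in V_K^{f}:K_v\models\varphi(\bar x(v))\}
\]
is finite of cardinality congruent to $r$ modulo $n$. Thus
\[
\{\bar x:Res(n,r)([[\varphi(\bar x)]])\}\;=\;\bigcup_{j\geq 0}\;\bigcup_{\substack{\mathcal{F}\subset V_K^{f}\\|\mathcal{F}|=jn+r}}\Bigl(\bigcap_{v\in\mathcal{F}}\{\bar x:K_v\models\varphi(\bar x(v))\}\cap\bigcap_{w\notin\mathcal{F}}\{\bar x:K_w\models\neg\varphi(\bar x(w))\}\Bigr),
\]
with the inner union ranging over finite subsets $\mathcal{F}$ of the countable set $V_K^{f}$ with the prescribed cardinality. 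This is a countable union of countable intersections of sets each of which is measurable in $\A_K^{fin}$ by uniform definability in $L$ combined with measurability in each $K_v$.

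The only real point to verify is that these countable unions are still measurable in $\A_K^{fin}$. Since $\A_K^{fin}$ with its product measure is $\sigma$-finite and each factor set is measurable (by the same argument used for Type I and Type II in Theorem \ref{th-def-meas}), countable unions and intersections stay measurable, and Boolean combinations preserve measurability. The main (mild) obstacle is simply bookkeeping of the decomposition of $Res(n,r)$ into cardinality classes, but no new ingredient beyond Theorem \ref{bool2} and the measurability of definable sets in each local factor $K_v$ is required.
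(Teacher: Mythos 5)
Your proposal is correct and follows essentially the same route as the paper: the paper's own proof simply says to repeat the argument of Theorem \ref{th-def-meas} while additionally decomposing the sets defined via $Res(n,r)$ into a countable union over finite subsets $\mathcal{F}\subset V_K^f$ with $|\mathcal{F}|\equiv r\ (\mathrm{mod}\ n)$, which is exactly your Type III analysis. You have merely spelled out the bookkeeping that the paper leaves implicit.
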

\begin{proof} The proof is similar to the proof of Theorem \ref{th-def-meas} but instead of considering finite subsets of $V_K^f$ or subsets of size $j$, we consider sets of 
satisfying $Res(n,r)(x)$, for all $n,r$.
\end{proof}
Note that $\cL^{fin,res}(L)$-definability in $\A_K$ is in general stronger than $\cL_{rings}$-definability in $\A_K$.

%\subsection{\it A definable set that is not a finite union of locally closed sets}

Given a topological space 
$X$ and a subset $A\subseteq X$, we denote the frontier of $A$ by $fr(A):=cl(A)\setminus A$. The locally closed part of $A$ is defined to be $A\setminus cl(fr(A))$ and denoted by $lc(A)$. Let $k\geq 1$. 
We put $A^{(0)}=A$, and $A^{(k+1)}=A^{(k)}\setminus 
lc(A^{(k)})$. We also 
put $fr^{(0)}(A)=A$ and $fr^{(k+1)}(A)=fr(fr^{(k}(A))$. Note that $A^{(k)}=fr^{(2k)}(A)$.

The following was proved by Dougherty and Miller \cite{D-miller}.
\begin{fact}\label{miller} Let $X$ be a topological space and $A\subseteq X$.\begin{itemize}
\item Let $k\geq 1$. Then $A$ is a union of $k$ locally closed sets if and only if $A^{(k)}=0$.
\item $A$ is a Boolean combination of open sets if and only if $A^{(k)}=0$ for some $k\geq 1$.
\end{itemize}\end{fact}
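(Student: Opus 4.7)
My plan is to prove both bullets by induction on $k$, built around a careful analysis of the peeling operator $A \mapsto lc(A)$.

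First I would record the preliminary fact that $lc(A)$ is itself locally closed in $X$. Setting $V := X \setminus cl(fr(A))$ (open), one has $lc(A) = A \cap V$; and since $fr(A) \cap V = \emptyset$, also $cl(A) \cap V = A \cap V = lc(A)$, exhibiting $lc(A)$ as the intersection of a closed set with an open set. I would also note the identities $A^{(1)} = A \cap cl(fr(A))$ and, by induction, $A^{(k)} = (A^{(1)})^{(k-1)}$.

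For the first bullet, the base case $k=1$ is immediate: $A$ is locally closed iff $fr(A)$ is closed iff $A \cap cl(fr(A)) = A \cap fr(A) = \emptyset$ iff $A^{(1)} = \emptyset$. The forward inductive direction, $A^{(k)} = \emptyset \Rightarrow A$ is a union of $k$ locally closed sets, is easy: the disjoint decomposition $A = lc(A) \sqcup A^{(1)}$ combined with the inductive hypothesis applied to $B := A^{(1)}$ (satisfying $B^{(k-1)} = A^{(k)} = \emptyset$) writes $A$ as the union of $lc(A)$ with $k-1$ locally closed pieces.

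The backward direction is the main obstacle: given $A = L_1 \cup \dots \cup L_k$ with each $L_i$ locally closed, show $A^{(k)} = \emptyset$. I would reduce to the key lemma that under these hypotheses $A^{(1)}$ is a union of at most $k-1$ locally closed sets, whence the induction finishes. The first half of proving the key lemma is routine: each $fr(L_i)$ is closed (since $L_i$ is locally closed), so $fr(A) \subseteq \bigcup_i fr(L_i)$ forces $cl(fr(A)) \subseteq \bigcup_i fr(L_i)$, and using $L_i \cap fr(L_i) = \emptyset$,
\[ A^{(1)} \;\subseteq\; A \cap \bigcup_i fr(L_i) \;=\; \bigcup_{i} \bigcup_{j \neq i} \bigl(L_j \cap fr(L_i)\bigr), \]
with each $L_j \cap fr(L_i)$ locally closed. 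Sharpening this naive $k(k-1)$ bound down to $k - 1$ is the subtle step; it rests on a Baire-category-type principle, namely that any finite union of locally closed sets must contain a nonempty relatively open subset contained in one of the pieces, together with the observation that after a suitable refinement of the decomposition such a piece lies entirely in $lc(A)$. Removing that $L_i$ from the decomposition then yields the count $k - 1$.

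The second bullet is a corollary of the first. The backward implication is immediate, since a locally closed set $L = U \cap C$ is already a Boolean combination of open sets. For the forward implication, one puts any Boolean combination of open sets in disjunctive normal form as a finite union of finite intersections of open and closed sets; each such intersection is locally closed, so $A$ is a finite union of locally closed sets, and the first bullet yields $A^{(k)} = \emptyset$ for some $k$.
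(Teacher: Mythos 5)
The paper itself offers no proof, only the citation to Dougherty--Miller, so there is no argument in the source to compare against; your overall architecture (base case, the easy inductive direction via $A = lc(A) \cup A^{(1)}$, second bullet as a corollary) is sound, and you correctly isolate the backward implication of the first bullet as the hard step. But the treatment of that step has a genuine gap. First, the ``naive bound'' is misdiagnosed: the inclusion $A^{(1)} \subseteq \bigcup_{i\neq j}\bigl(L_j\cap fr(L_i)\bigr)$ only bounds $A^{(1)}$ from above, which is not the same as decomposing it; the actual starting point should be the equality $A^{(1)} = A\cap cl(fr(A)) = \bigcup_{i=1}^{k}\bigl(L_i\cap cl(fr(A))\bigr)$, which already exhibits $A^{(1)}$ as a union of exactly $k$ locally closed pieces, so the whole job is to shave off one piece, not $k^2-2k+1$ of them. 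Second, the proposed mechanism --- invoke a ``Baire-type principle'' (asserted, not proved) to find $V\subseteq L_i$ inside $lc(A)$, then ``remove $L_i$'' --- does not cohere: removing $L_i$ from the decomposition of $A$ yields $\bigcup_{j\neq i}L_j$, which is not $A^{(1)}$, and the reduction you would actually need is that for some $i$, $L_i\cap cl(fr(A))\subseteq\bigcup_{j\neq i}L_j$, so that the $i$-th piece of the $k$-fold decomposition of $A^{(1)}$ is absorbed by the others.

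That reduction is false in general. Take $X=\R^2$, $L_1=\{(x,\sin(1/x)):0<x<1\}\cup\{(2,0)\}$ and $L_2=\{(x,\sin(1/(x-2))):2<x<3\}\cup\{(0,0)\}$; both are locally closed, and $cl(fr(A))$ consists of the two limit segments $\{0\}\times[-1,1]$ and $\{2\}\times[-1,1]$ together with the two arc endpoints. One computes $L_1\cap cl(fr(A))=\{(2,0)\}\not\subseteq L_2$ and $L_2\cap cl(fr(A))=\{(0,0)\}\not\subseteq L_1$, so neither original piece can be dropped. Of course $A^{(1)}=\{(0,0),(2,0)\}$ \emph{is} locally closed, being finite, so the key lemma holds here --- but by a re-decomposition unrelated to the $L_i$'s, which is exactly what your sketch does not supply. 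Until the key lemma is established by some other route (for instance a careful analysis of the closures $cl(L_i)$ that locates a nonempty open piece of $cl(A)$ inside $lc(A)$ and then re-decomposes $A^{(1)}$, rather than discarding one of the given $L_i$), the only nontrivial implication in the Fact remains unproved.
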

\begin{proof} See \cite{DM}, page 1348.\end{proof}

We can now prove.
\begin{thm} Let $X$ be the definable set $\{a\in \A_K: Fin([[a\neq a^2]])\}$. Then $X$ is not a finite Boolean combination of open  sets in $\A_K$.\end{thm}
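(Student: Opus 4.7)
The plan is to use Fact \ref{miller}: it suffices to show that $X^{(k)}=X\neq\emptyset$ for all $k\geq 1$, and this will follow from the stronger statement that both $X$ and its complement are dense in $\A_K$. Recall that $X$ is precisely the set of adeles $a$ with $a(v)\in\{0,1\}$ for all but finitely many $v\in V_K$ (equivalently $a(v)=a(v)^2$ at cofinitely many places), so $0\in X$ and $X\neq\emptyset$.

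First I would verify that $X$ is dense in $\A_K$. A basic open neighborhood of an arbitrary adele $b$ has the form $U=\prod_v U_v$ where $U_v\subseteq K_v$ is open containing $b(v)$, and $U_v=\mathcal{O}_v$ for $v$ outside some finite set $S$ (which we may enlarge to include $\mathrm{Arch}(K)$). Define $a\in\A_K$ by $a(v)=b(v)$ for $v\in S$ and $a(v)=0$ for $v\notin S$. Then $a(v)\in\{0,1\}$ off the finite set $S$, so $a\in X\cap U$. Hence $cl(X)=\A_K$.

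Next I would show $\A_K\setminus X$ is dense. Given a basic open $U$ as above, define $a\in\A_K$ by $a(v)=b(v)$ for $v\in S$ and $a(v)=\unif_v$ for $v\notin S$, where $\unif_v$ is a uniformizer of $K_v$. Since $\unif_v\in\mathcal{O}_v$, $a$ is indeed an adele, and $a\in U$. Moreover $v(\unif_v)=1$, so $\unif_v\neq 0,1$ and therefore $\unif_v\neq\unif_v^2$; thus $\{v:a(v)\neq a(v)^2\}$ contains the cofinite set $V_K\setminus S$, giving $a\notin X$. Hence $cl(\A_K\setminus X)=\A_K$.

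Combining these two density statements yields $fr(X)=cl(X)\setminus X=\A_K\setminus X$, and then $cl(fr(X))=cl(\A_K\setminus X)=\A_K$. Consequently
\[
lc(X)=X\setminus cl(fr(X))=X\setminus\A_K=\emptyset,
\]
so $X^{(1)}=X\setminus lc(X)=X$, and by induction $X^{(k)}=X\neq\emptyset$ for every $k\geq 1$. Fact \ref{miller} then tells us $X$ is not a finite Boolean combination of open sets. The only subtle step is verifying the density of the complement: one needs uniformizers (or any other non-idempotent element of $\mathcal{O}_v$) to exist at every non-archimedean place, which is automatic, but it is worth recording that we do not need to worry about residue characteristic $2$ issues since $\unif_v$ is never $0$ or $1$ regardless.
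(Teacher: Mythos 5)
Your proof is correct and follows essentially the same route as the paper: the paper's argument rests on the claim (stated as "easy to see") that $fr(fr(X))=X$, hence $X^{(k)}=X$ for all $k$, and then applies the Dougherty--Miller criterion, exactly as you do. Your density computations for $X$ and its complement are precisely the content behind that "easy to see" step, so your write-up is a correct, slightly more detailed version of the paper's proof.
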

\begin{proof}
It is easy to see that
$$fr(fr(A))=A.$$
Thus $fr^{(k)}(A)=A$ for all $k\geq 1$. Hence $A^{(k)}=A$ for all $k\geq 1$. Thus by Fact \ref{miller}, $A$ is not a union of $k$-many locally closed sets, for any $k$.\end{proof}

By Theorem \ref{th-def-sets}, the definable subsets of the adeles in the ring language 
are at the lowest level of the Borel hierarchy: 
sets of the form $\{\bar f: [[\phi(\bar f)]]=1\}$, where $\phi$ is a ring formula are finite unions of locally closed sets, 
sets of the form $\{\bar f: Fin([[\phi(\bar f)]])\}$ are countable unions of locally closed sets, and set of the form 
$\{\bar f: \neg Fin([[\phi(\bar f)]])\}$ are countable intersections of locally closed sets.

\subsection{\it Measures of definable sets}

\

\

Measures of definable subsets of $\A_K^m$, for $m\geq 1$, are related to numbers of arithmetical significance. A general description of them seems out of reach at present. However, 
in this section we shall give examples of measures of definable sets related to values of zeta and $L$-functions.

\

{\it i) Zeta functions in the language of rings} 

\

Let $\psi(x)$ be the formula $0\leq v(x) \leq 1$ (note that this statement has an $\cL_{rings}$-definition). 
%$$\Phi_{val}(x)\wedge \neg \Phi_{val}(x^{-1}) \wedge \exists z \Phi_{val}(z) \wedge 
%\forall y~(\Phi_{val}(yt^{-1})) \wedge \neg \Phi_{val}(t^{-2}x).$$
Then the set in $\Bbb Q_p$ 
defined by $\psi(x)$ has measure $1-p^{-2}$, and the measure of the set 
defined by $\psi(x)$ in the finite adeles $\A_{\Bbb Q}^{fin}$ is the Euler product 
$$\prod_{p}(1-p^{-2})=(\zeta(2))^{-1}.$$
Similarly, the definable set $\{x: v(x)=0\}$ in $\Bbb Q_p$ has measure 
$1-p^{-1}$ and the measure of the corresponding definable set in $\A_{\Q}^{fin}$ is 
$\prod_{p}(1-p^{-1})=0$. Thus reciprocals of zeta values at positive integers are thus measures of definable sets in the finite adeles. 

Now let $\theta(x):=\psi(x) \wedge \exists w (w^2=-1)$. Then 
the measure of the set defined by $\psi(x)$ in $\Bbb A_{\Bbb Q}^{fin}$ is 
$\prod_{p\equiv 1(4)}(1-p^{-2})$ which is related to the zeta function of the 
quadratic extension $\Bbb Q(i)$. 

In general, Euler products of the form $\prod_{p\in S} (1-p^{-n})$, where $S$ is a set of primes $p$ where some sentence $\sigma$ of the language of rings holds in $\Bbb F_p$ will
be the measure of a definable set in the finite adeles $\A_K^{fin}$. Similarly $1/\zeta(s)$ can be shown to be an 
$\cL_{rings}$ -"definable integral" in $\A_{\Q}^{fin}$ (namely, an integral over a definable set in $\A_K$ of a function of the form $|f(\bar x)|^s$, where $f$ is a definable function in $\A_K$). 

\

{\it ii) Zeta functions in the adelic Denef-Pas language} 

\

While we saw that $1/\zeta(n)$ can be written as a measure of a definable set in $\A_{\Q}^{fin}$ in the ring language, we can show that $\zeta(n)$ itself is the measure of a definable set in $\A_{\Q}^{fin}$ in the adelic Denef-Pas language. 
Indeed, we show that $(1-p^{-n})^{-1}$ can be written as a measure of a definable set in $\Q_p$ in the Denef-Pas language, say defined by a formula $\psi(x)$. Then the set in $\A_{\Q}^{fin}$ defined by $[[\psi(x)]]^{na}=1$ will have measure equal to 
$\prod_p (1-p^{-n})^{-1}$. Now $1-p^{-n}$ equals $1+p^{-n}+p^{-2n}+\dots$. Let $\mu$ denote the normalized additive  Haar measure on $\Q_p$. Note that $1$ is the measure of the set
$$X:=\{x\in \Q_p: v(x)=-1 \wedge ac(x)=1\}.$$
To see this note that 
$$\mu(X)=\mu(\{x=p^{-1}u: v(x)=-1 \wedge ac(x)=1\}$$
$$=p\mu(\{u: v(u)=0 \wedge ac(u)=1\}=pp^{-1}=1.$$
Note that this set lies in the complement of $\Z_p$ in $\Q_p$. We now show that the sum is a measure of a definable set in 
$\Z_p$. For $n\geq 1$, let 
$$X_n:=\{x\in \Q_p: v(x)=n-1 \wedge ac(x)=1\}.$$
Then 
$$\mu(X_n)=\mu(\{x=p^{n-1}u: v(x)=n-1 \wedge ac(x)=1\}$$
$$=p^{-n+1}\mu(\{u: v(u)=0 \wedge ac(u)=1\}=p^{-n+1} p^{-1}=p^{-n}.$$
So consider the set
$$Y:=\{x\in \Q_p: v(x)\geq 0 \wedge v(x)\equiv 1 (mod~ k)\}.$$
Then this set is definable in $\Q_p$ and has measure $p^{-n}+p^{-2n}+\dots$. The disjoint union of $Y$ and $X$ is the intended set. 

Similarly, one can show that values of zeta functions $\zeta(s)$ are "definable" integrals in the Denef-Pas language for $\A_{\Q}
^{fin}$. 

If $f(\bar x)$ is a definable (in some setting) function and $X$ a definable subset of $\A_{\Q}^{fin}$, it is not always true that the adelic "definable" integrals $\int_X f(\bar x) d\mu$, where $d\mu$ is some measure on the adeles, have meromorphic continuation beyond their abscissa of convergence. For example, if we take the function $f(x,y,z)$ that at all places $p$ is defined by $|x|^{s+1}|y|^s|z|^s$, then arguing above one can show that
$$\int_X f(x,y,z) d\mu=\prod_p \left(1+\frac{p^{-1-s}}{(1-p^{-s})}\right)$$
which converges for $Re(s)>0$ but has $Re(s)=0$ as a natural boundary. 

More results on analytic properties of adelic definable integrals will appear in our work \cite{DM-ad2}. The question of which 
"definable" adelic integrals of a complex variable $s$ admit meromorphic continuation to the left of their abscissa of convergence is a challenging open problem. 

In Tate's thesis (cf.~ \cite{CF}), Tate normalizes the multiplicative measures $dx/|x|$ on the local fields $K_v^*$ by multiplying by $(1-q_v^{-1})$, where $q_v$ is the cardinality of the residue field of $K_v$, for all $v\in V_K^f$. 
These measures give a measure on the ideles 
$\Bbb I_K$ that gives the meromorphic continuation and functional equations of zeta functions in Tate's work. 
By the above, these normalization factors 
are measures of definable sets in Denef-Pas language, and thus the measures are "definable". The corresponding adelic and idelic measures are thus "definable" in the adelic Denef-Pas language. The same normalization factors are also used in the theory of  Tamagawa measures on adelic spaces of varieties.

\

\subsection{\it Definable sets of minimal idempotents in the adeles}

\

\

A natural question is what the definable subsets are of the set of minimal idempotents in the adeles. The following theorem provides an answer using the work by Ax \cite{ax}.

\begin{thm} Let $X$ be a definable subset of the set of minimal idempotents in $\A_K$. Then $X$ is in the Boolean algebra generated by finite sets of minimal idempotents 
and sets of minimal idempotents supported on sets of places of the form
$$\{v: k_v \models \sigma\}$$
where $k_v$ is the residue field of the local field $K_v$, and $\sigma$ is a sentence of the language of rings. Such a sentence can be chosen to be a Boolean 
combination of sentences of the form $\exists x f(x)=0$, where $f$ is a polynomial over $\Bbb Z$ in the single variable $x$. 
The above sets of places (if infinite) have the form
$$\{\frak p: Frob_{\frak p} \in C\}$$
where $C$ is a subset of $Gal(L/\Bbb Q)$ closed under conjugation, $L$ is a finite Galois extension of $\Bbb Q$, and $Frob_{\frak p}$ 
is the Frobenius conjugacy class.\end{thm}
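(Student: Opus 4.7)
The plan is to combine Theorem~\ref{th-def-sets} on definable subsets of $\A_K^{fin}$, the Ax--Kochen--Ershov principle, Ax's classification of the elementary theory of finite fields~\cite{ax}, and the Chebotarev density theorem. Since $\mathrm{Arch}(K)$ is finite, archimedean minimal idempotents contribute only a finite set of minimal idempotents (absorbed into the first generator), and the substantive argument concerns the non-archimedean part.

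First I would apply Theorem~\ref{th-def-sets}: the set $X$ is cut out by a Boolean combination of conditions of the form $C_j([[\psi(x)]])$ and $Fin([[\psi(x)]])$ for ring formulas $\psi$. For a non-archimedean minimal idempotent $e$ with support $\{v_0\}$, pointwise evaluation gives
$$[[\psi(e)]] \;=\; \bigl(S_\psi \setminus \{v_0\}\bigr) \,\cup\, \bigl(\{v_0\} \cap T_\psi\bigr),$$
where $S_\psi := \{v\in V_K^f : K_v \models \psi(0)\}$ and $T_\psi := \{v\in V_K^f : K_v \models \psi(1)\}$ depend only on $\psi$, not on $e$. A direct inspection then shows that $Fin([[\psi(e)]])$ is just the constant truth value of ``$S_\psi$ is finite'', while $C_j([[\psi(e)]])$ is either constant (when $S_\psi$ is infinite) or a Boolean function of the two conditions ``$v_0 \in S_\psi$'' (which, $S_\psi$ then being finite, cuts out a finite set of minimal idempotents) and ``$K_{v_0} \models \psi(1)$'' (a ring-theoretic sentence condition on $v_0$). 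Hence $X$ already lies in the Boolean algebra generated by finite sets of minimal idempotents together with sets of the form $\{e : K_{v_e} \models \tau\}$ for ring-theoretic sentences $\tau$.

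Next, for all but finitely many $v \in V_K^f$ the completion $K_v$ is an unramified henselian valued field of residue characteristic $p_v$, value group $\Z$, and finite residue field $k_v$. By the Ax--Kochen--Ershov theorem (applied in the ring language, using that the valuation ring of $K_v$ is uniformly ring-definable by Theorem~\ref{CDLM-th}), the theory $\mathrm{Th}(K_v)$ is determined by $\mathrm{Th}(k_v)$. Absorbing the finite set of ramified exceptions into the finite-set generators, one obtains $\{v : K_v \models \tau\} = \{v : k_v \models \tau'\}$ for an effectively computable ring-theoretic sentence $\tau'$ about finite fields. By Ax's theorem~\cite{ax}, $\tau'$ is equivalent over the class of finite fields to a Boolean combination of sentences of the form $\exists x\, f(x) = 0$ with $f \in \Z[x]$.

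Finally, for such an $f$ with splitting field $L$ over $\Q$, Chebotarev density identifies $\{v : k_v \models \exists x\, f(x) = 0\}$, outside the finitely many primes ramifying in $L$, with $\{\mathfrak{p} : \mathrm{Frob}_{\mathfrak{p}} \in C\}$ for a conjugation-closed $C \subseteq \mathrm{Gal}(L/\Q)$. The main obstacle is the first step: one must carefully track how $C_j$ and $Fin$ behave under substitution of a minimal idempotent and verify that the only surviving dependence on $v_0$ is either membership in a finite set of places or the truth of a first-order sentence in $K_{v_0}$. Once this combinatorial reduction is secured, AKE, Ax, and Chebotarev successively refine the description into the form asserted in the theorem.
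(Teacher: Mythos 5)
Your proof is correct and follows the same strategy as the paper: reduce via Theorem~\ref{th-def-sets} to Boolean combinations of $C_j$ and $Fin$ conditions on Boolean values, pass to sentences about the local factors, then apply Ax--Kochen--Ershov (using uniform ring-definability of the valuation and that the value group is constantly $\Z$), Ax's theorem on finite fields, and Chebotarev. In fact your pointwise computation $[[\psi(e)]]=(S_\psi\setminus\{v_0\})\cup(\{v_0\}\cap T_\psi)$, with the ensuing case analysis on $C_j$ and $Fin$, makes precise the step that the paper states only cursorily (``the formulas $\varphi(x)$ or $\theta(x)$ can be replaced by sentences''), so your write-up is actually a useful expansion of the paper's argument rather than a departure from it.
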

\begin{proof} By Theorem \ref{th-def-sets}, $X$ is a Boolean combination of sets of the form
$$Fin([[\varphi(x)]])$$
and 
$$C_j([[\theta(x)]]).$$
Since $X$ is a set of minimal idempotents, it follows that the formulas 
$\varphi(x)$ or $\theta(x)$ can be replaced by sentences . Now we use the Ax-Kochen-Ershov theorem stating that given a sentence $\sigma$, 
there is a prime $p_0$ such that for any $p\geq p_0$, for any non-archimedean local field $K$ of residue characteristic at least $p_0$ with residue field $k$ and value group $\Gamma$, 
$$K\models \sigma \Leftrightarrow (k\models \rho \wedge \Gamma \models \Xi)$$
where $\rho$ is a Boolean combination of sentences from the language of rings, and $\Xi$ is a Boolean combination of sentences from the language of ordered abelian groups. This can be deduced from 
the residue zero characteristic case of the quantifier elimination in Basarab-Kuhlmann's language \cite{kuhlmann}.

Since the value groups of all the local fields under consideration are isomorphic, we see that $\sigma$ is equivalent in all Henselian valued fields of residue characteristic zero (hence in all Henselian fields of large residue characteristic) 
to a Boolean combination of residue field statements. Hence the Boolean values 
$$\{v: K_v \models \sigma\}$$
and
$$\{v: k_v \models \sigma\}$$
are the same except for a finite subset, where $k_v$ is the residue field of $K_v$. Now by the results of Ax \cite{ax}, these sets, if infinite, have the required description.\end{proof}

\

\subsection{\it An enrichment of the Boolean sort}

\

\

In Theorem \ref{th-qe}, if we take as $\mathcal{L}$ the language $\mathcal{L}^{fin,res}$ (and $L$ as before), then 
we get more expressive power than the language $\mathcal{L}^{fin}(L)$ that is used in Corollary 
\ref{th-def-sets}. 

\begin{thm} The $\mathcal{L}^{fin}(L)$-theory of $\A_K$ is decidable and has quantifier elimination.
\end{thm}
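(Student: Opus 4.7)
The plan is to combine Theorem~\ref{th-qe} (quantifier elimination for the restricted product) with the splitting $\A_K = \prod_{v\in \mathrm{Arch}(K)} K_v \times \A_K^{fin}$ and the finite-product reduction of Corollary~\ref{mostowski}.

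First I would apply Theorem~\ref{th-qe} with Boolean language $\mathcal{L}^{fin}$. The Powerset enrichment $\mathbb{P}(V_K^f)^+$ equipped with the predicates $Fin$ and $C_j$ is an infinite atomic Boolean algebra and, by Theorem~\ref{bool1}, admits effective quantifier elimination and a decision procedure in $\mathcal{L}^{fin}$. Since $L$ is (one of) the languages of Section~\ref{sec-qe} in which the non-archimedean completions $K_v$ admit \emph{uniform} quantifier elimination in the field sort relative to the auxiliary sorts, Theorem~\ref{th-qe} produces, for every $\mathcal{L}^{fin}(L)$-formula $\Psi(\bar x)$ over $\A_K^{fin}$, $L$-formulas $\psi_1,\dots,\psi_m$ that are quantifier-free in the field sort and an $\mathcal{L}^{fin}$-formula $\theta$ such that $\A_K^{fin}\models \Psi(\bar a)$ iff $\B_K^{fin}\models \theta([[\psi_1(\bar a)]],\dots,[[\psi_m(\bar a)]])$. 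Invoking the Boolean quantifier elimination from Theorem~\ref{bool1}, the outer $\theta$ can be chosen quantifier-free in $\mathcal{L}^{fin}$. This gives quantifier elimination for the $\mathcal{L}^{fin}(L)$-theory of $\A_K^{fin}$.

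Next I would extend to $\A_K$ itself by writing $\A_K = \prod_{v\in \mathrm{Arch}(K)} K_v \times \A_K^{fin}$ and applying Theorem~\ref{fv-fin-prod} / Corollary~\ref{mostowski}: any definable subset of $\A_K$ is a finite union of rectangles with archimedean components definable in $\R$ or $\C$ (where Tarski's theorem provides quantifier elimination in the languages of ordered fields and of rings, respectively) and non-archimedean component definable in $\A_K^{fin}$. Using the idempotents $e_{\R}$ and $e_{\C}$ introduced in Section~\ref{sec-id} to internalize the archimedean splitting inside $\mathcal{L}^{fin}(L)$, one assembles from the factor-wise quantifier eliminations a quantifier-free presentation on all of $\A_K$.

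For decidability, all ingredients are effectively decidable: $T^{fin}$ via Theorem~\ref{bool1}; the $L$-theory of the completions $K_v$, $v \in V_K^f$, uniformly in $v$, via Ax--Kochen--Ershov combined with the uniform quantifier elimination in the chosen $L$ (Belair, Weispfenning, Basarab--Kuhlmann, or adelic Denef--Pas); and the theories of $\R$ and $\C$ via Tarski. The construction in Theorem~\ref{restricted-qe} is effective, so any $\mathcal{L}^{fin}(L)$-sentence about $\A_K$ reduces mechanically to a quantifier-free Boolean combination of sentences about Boolean values $[[\psi]]$, whose truth is decided by evaluating the underlying $L$-sentences on the factors and the resulting Boolean statement in $T^{fin}$. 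The main obstacle is bookkeeping the many-sorted, non-uniform behavior at the finitely many ``bad'' places (e.g., small residue characteristics or ramification): these exceptional places contribute only finite-support corrections and are absorbed using the $Fin$ predicate, so uniformity and effectiveness at cofinitely many $v$ suffice for both quantifier elimination and decidability.
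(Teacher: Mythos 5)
Your argument follows the paper's route: apply Theorem~\ref{th-qe} together with the relevant Boolean quantifier-elimination/decidability theorem, then absorb the archimedean factor via the finite-product (Mostowski) reduction and pass decidability down to the factors. The one discrepancy is that the paper's proof invokes Theorem~\ref{bool2} (for $T^{fin,res}$), in line with the surrounding text about $\mathcal{L}^{fin,res}(L)$, whereas you use Theorem~\ref{bool1}; this tracks what appears to be a typo in the theorem statement, which writes $\mathcal{L}^{fin}(L)$ where $\mathcal{L}^{fin,res}(L)$ was evidently intended, and your version is a correct proof of the statement as literally written.
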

\begin{proof} Apply Theorem \ref{th-qe} and Theorem \ref{bool2}.\end{proof}

The $\mathcal{L}^{fin}(L)$-theory of the adeles is rich in connection to arithmetic and in particular reciprocity laws. An example of such 
a connection is the following. We expect many more such result to hold.

\begin{prop} The set of adeles $a\in \A_{\Q}$ such that $a(v)$ is a non-square at an even number of places $v$ is $\mathcal{L}^{fin,res}(L)$-definable in $\A_{\Q}$.\end{prop}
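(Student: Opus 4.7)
The plan is to exhibit a single explicit $\cL^{fin,res}(L)$-formula defining the set. First, I take the $\cL_{rings}$-formula $\psi(x) := \neg \exists y\, (y^2 = x)$, which expresses in any field that $x$ is not a square. In particular this is an $L$-formula for any of the field languages (i)--(iii) of Section \ref{sec-qe}.

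Next, for an adele $a \in \A_\Q$, the Boolean value $[[\psi(x)]](a)$ in the generalized-product language of Section \ref{sec-rest} (or equivalently, via the idempotent formalism of Section \ref{sec-id}) is the idempotent supported precisely on the set
$$S(a) = \{v \in V_\Q : \Q_v \models \neg \exists y\, (y^2 = a(v))\},$$
i.e., the set of places at which $a(v)$ is a non-square. By construction, this Boolean value is definable in $\A_\Q$, and under the correspondence between subsets of $V_\Q$ and idempotents of $\A_\Q$, it matches the subset $S(a)$ of $V_\Q$.

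The third step is to apply the unary predicate $Res(2,0)$ from $\cL^{fin,res}$, which is interpreted on $Powerset(V_\Q)$ as asserting that its argument is a finite subset of even cardinality (see Section \ref{sec-bool}). Evaluating it on the idempotent computed above, the formula
$$Res(2,0)\bigl([[\psi(x)]](a)\bigr)$$
holds exactly when $S(a)$ is finite and has even size. This single $\cL^{fin,res}(L)$-formula therefore defines the desired set of adeles, which proves the proposition.

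There is no genuine obstacle here: once the general quantifier-elimination framework of Theorem \ref{th-qe} and the intended interpretation of $Res(n,r)$ in $\cL^{fin,res}$ are in place, the definition is immediate. The substantive point the proposition illustrates is that the enrichment by the residue predicates $Res(n,r)$ grants access to parity (and more generally modulo-$n$) statements over places, which is the natural formalism into which reciprocity laws---such as Hilbert's product formula for the quadratic Hilbert symbol---translate; the many further arithmetically meaningful definable conditions alluded to in the paragraph preceding the proposition are obtained by exactly the same pattern, replacing $\psi$ by other $L$-formulas and $Res(2,0)$ by other residue predicates.
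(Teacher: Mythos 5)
Your proof is correct and gives a direct construction: you take the $L$-formula $\psi(x):=\neg\exists y\,(y^2=x)$, observe that $[[\psi(x)]](a)$ is the idempotent supported on the set of places where $a(v)$ is a non-square, and then $Res(2,0)$ applied to that Boolean value expresses exactly ``finite of even cardinality''. This is the natural, immediate route once the $Res(n,r)$ predicates are available, and it is the route that actually establishes the definability claim.

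The paper's own proof goes a genuinely different way. Rather than exhibiting a defining formula, it invokes the quadratic Hilbert symbol $(a,b)_v$ and Hilbert's product formula $\prod_v(a,b)_v=1$ to argue that for rational $a,b$ the set of places where the symbol equals $-1$ is finite of even size. That is the reciprocity-theoretic content the proposition is designed to illustrate --- it explains why the principal adeles land inside a definable set of this parity type --- but it leaves the $\cL^{fin,res}(L)$-definability itself implicit, and the passage from ``the symbol is $-1$ at an even number of places'' to ``$a$ is a non-square at an even number of places'' is not spelled out. Your construction supplies precisely the missing formula, and you already flag the reciprocity angle in your closing paragraph, so the two discussions are complementary rather than in conflict. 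One small bookkeeping remark: in the paper's formalism the restricted-product Boolean value is formally taken over the non-archimedean places, with $\A_{\Q}=\R\times\A_{\Q}^{fin}$ handled via the finite-product lemma; a fully pedantic version of your defining formula would therefore case-split on whether $a(\infty)$ is a square in $\R$ and apply $Res(2,0)$ or $Res(2,1)$ to $[[\psi(x)]]^{na}(a)$ accordingly. This does not affect the substance.
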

\begin{proof} Give $x,y \in \Q_v$, where $v\in \Bbb P \cup \{\infty\}$, where $\Bbb P$ denotes the set of primes, the Hilbert symbol is denoted as $(x,y)_p$. It is known that
$(x,y)_v=1$ if $x$ or $y$ is a square, and $(a,b)_v$ takes values in $\{-1,1\}$.

Now let $a,b \in \Q$. Then it is well-known that $(a,b)_v=1$ for all but finitely many $v$, and that 
$$\prod_{v\in \Bbb P \cup \{\infty\}} (a,b)_v=1.$$
This is a product of finitely many terms $(a,b)_v$, and  if $a$ or $b$ is a non-square, then the number of terms in the product that is not $1$ must be even (as the product is $1$). 
Thus $a$ or $b$ can be a non-square at only an even number of $v$.\end{proof}

Note that quadratic reciprocity implies that $\A_{\Q}$ containing $\Q$. We remark that the $\mathcal{L}^{fin,res}$-theory of $\A_K$ is stronger than the $\cL_{rings}$-theory of $\A_K$.

\

\subsection{\it Interpretable sets and imaginaries}

\

\

It is natural to ask whether given a family of $L$-stuctures $\mathcal{M}_i$, $i\in I$, in some language $L$ that have uniform elimination of imaginaries, the restricted product of $\mathcal{M}_i$ 
relative to some formula $\varphi(x)$ also has elimination of imaginaries in the language for restricted products induced by $L$ (see Section \ref{sec-rest}). 
This, in view of the uniform $p$-adic elimination of imaginaries due to Hrushovski-Martin-Rideau \cite{HMR} in a many-sorted language expanding the ring language by sorts for the spaces of lattices $GL_n(\Q_p)/GL_n(\Z_p)$ for all $n$, would give an elimination of imaginaries for $\A_{\Q}^{fin}$ (and hence for $\A_{\Q}$ in using elimination of imaginaries for archimedean local fields).

In \cite{CC2}, Connes-Consani have studied the space $\Q^{\times} \setminus \A_{\Q} / \hat{\Bbb Z}^*$ which is a quotient of the space of adele classes $\A_{\Q}/{\Q}^*$ (introduced by Connes) by the maximal compact subgroup of the idele class group $\hat{\Bbb Z}^*$. By works of Connes, these spaces are related to the zeroes of the Riemann zeta function $\zeta(s)$ (cf.~\cite{CC} and references there). We show that these spaces are imaginaries in $\A_{\Q}^f$.

\begin{prop} The structure $\Q^{\times} \setminus \A_{\Q} / \hat{\Bbb Z}^*$ is interpretable in $\A_{\Q}$.\end{prop}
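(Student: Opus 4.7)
The plan is to realize $\Q^\times \backslash \A_\Q / \hat{\Z}^*$ as the quotient of a definable subset of $\A_\Q$ by a definable equivalence relation. I would take $X := \A_\Q$ and put
$$a \mathrel{E} b \iff \exists\, q, u \in \A_\Q \;\bigl(q \in \Q^\times \wedge u \in \hat{\Z}^* \wedge a = qub\bigr),$$
so that $X/E$ is canonically in bijection with the target. Interpretability then reduces to exhibiting $E$ as a first-order condition in the language of rings; and since only the product subgroup $\Q^\times \cdot \hat{\Z}^* \subset \A_\Q^\times$ enters the formula for $E$, it suffices to show that this product subgroup is definable.

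First I would define $\hat{\Z}^*$: using the formula $\Phi_{val}(x)$ of Theorem \ref{CDLM-th} together with the Boolean-value formalism of Section \ref{sec-id}, the set $\hat{\Z}^* \subset \A_\Q$ is the set of $u$ satisfying $e_\infty u = e_\infty$ (the archimedean components of $u$ equal $1$) together with
$$[[\Phi_{val}(u) \wedge \exists w\,(\Phi_{val}(w) \wedge u w = 1)]]^{na} = 1 - e_\infty,$$
saying that $u(v)$ is a unit in the local ring of integers at every non-archimedean place. Both conditions are first-order in $\cL_{\mathrm{rings}}$, uniformly in $K$.

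For $\Q^\times \cdot \hat{\Z}^*$, I would invoke the classical product-formula identity
$$\Q^\times \cdot \hat{\Z}^* = \ker\bigl(|\cdot|_{\A} \colon \A_\Q^\times \to \R_{>0}\bigr).$$
Indeed, for an idele $c$ with $|c|_{\A} = 1$ one has $|c_\infty| = \prod_p p^{v_p(c_p)}$, a finite product since $c$ is an adele, which forces $c_\infty \in \Q^\times$ and then $c/c_\infty \in \hat{\Z}^*$. Thus on the ideles, the equivalence $a \mathrel{E} b$ collapses to the content condition $|a_\infty| \prod_p p^{v_p(a_p)} = |b_\infty| \prod_p p^{v_p(b_p)}$, in which by Theorem \ref{fin-def} only a definable finite set of places contributes. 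For adeles with nontrivial zero support, both $\Q^\times$ and $\hat{\Z}^*$ preserve the support idempotent $\mathrm{supp}(a) \in \B_K$, and one reduces to the idele case inside the definable sub-algebra $\mathrm{supp}(a)\cdot\A_\Q$.

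The chief obstacle is expressing this content equation as a single first-order formula. My plan is to existentially quantify over an auxiliary principal rational $r \in \A_\Q$ with $v_p(r) = v_p(a_p) - v_p(b_p)$ at all finite places and $r_\infty |b_\infty| = |a_\infty|$ at the archimedean places. This reduces everything to showing that the image of the diagonal $\Q \hookrightarrow \A_\Q$ is itself $\cL_{\mathrm{rings}}$-definable; this can be extracted from Theorem \ref{th-def-sets} combined with strong approximation in $\A_\Q$ and the finite-support-idempotent apparatus of Theorem \ref{fin-def}. Once the principal rationals are definable, the content map is first-order, $E$ is definable by an existential formula, and the quotient $X/E$ is interpretable in $\A_\Q$.
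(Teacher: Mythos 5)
Your setup is fine and matches the paper: reduce interpretability of $\Q^{\times}\backslash \A_{\Q}/\hat{\Bbb Z}^*$ to definability of the multiplicative orbit equivalence relation of $T=\Q^{\times}\hat{\Bbb Z}^*$, and define $\hat{\Bbb Z}^*$ via $\Phi_{val}$ and the Boolean-value apparatus. The gap is in the crucial step, where you define $T$ as $\ker(|\cdot|_{\A})$ and then reduce the content equation to the claim that the diagonal image of $\Q$ in $\A_{\Q}$ is $\cL_{\rm rings}$-definable, ``extracted from Theorem \ref{th-def-sets} combined with strong approximation.'' Nothing of the sort is available: Theorem \ref{th-def-sets} only constrains what definable sets look like (Boolean combinations of $C_j$- and $Fin$-conditions on Boolean values), it gives no mechanism for showing a particular subset such as $\Q$ is of that form, and strong approximation (density of $\Q$ in the finite adeles) if anything points the other way. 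Indeed, a parameter-free definition of $\Q$ in $\A_{\Q}$ is impossible: the quantifier elimination of this paper (already in Weispfenning) yields decidability of $\mathrm{Th}(\A_{\Q})$, while $\mathrm{Th}(\Q)$ is undecidable by Julia Robinson, and the Feferman--Vaught structure of definable sets rules out the countable diagonal also with parameters. So the step your whole argument hinges on is not a citation gap but a false (or at best wide open and certainly unproved) claim. A secondary soft spot: your reduction ``to the idele case inside $\mathrm{supp}(a)\cdot\A_{\Q}$'' ignores that an adele need not be invertible on its support (e.g.\ $a(p)=p$ for all $p$), so the finite-product content condition does not even make sense there without the auxiliary rational you then cannot define.

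The paper's proof is designed precisely to avoid defining $\Q$ or the idelic norm. The observation is that multiplying by $\hat{\Bbb Z}^*$ destroys all information about a rational except its valuations at finitely many primes, so on the non-archimedean part
$$\Q^{\times}\hat{\Bbb Z}^* \;=\; \hat{\Bbb Z}^*\cdot\{\alpha:\ \alpha(p)\neq 0 \text{ for all } p,\ \alpha(p)=1 \text{ for almost all } p\},$$
and the right-hand factor is definable outright by $[[\alpha\neq 0]]^{na}=1 \wedge Fin([[\alpha\neq 1]])$: given $x=\theta r$ with $\theta\in\hat{\Bbb Z}^*$, $r\in\Q^{\times}$, one absorbs the unit parts of $r$ into $\theta$; conversely, given $x=\theta\alpha$ with $\alpha$ everywhere nonzero and almost everywhere $1$, the rational $r=\prod_{p\in E}p^{v_p(\alpha(p))}$ witnesses $x\in\Q^{\times}\hat{\Bbb Z}^*$. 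If you replace your kernel-of-the-norm step by this existential description of $T$, the rest of your argument (orbit equivalence relation, quotient) goes through as in the paper.
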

\begin{proof} 
Let $T=\hat{\Bbb Z}^* \Bbb Q^*$. We claim that
\begin{equation}\label{(*)}
x\in T \Leftrightarrow \exists \theta \exists \alpha [([[\Phi_{val}(x) \wedge \Phi_{val}(x^{-1})]]^{na}=1 \wedge [[\alpha\neq 0]]^{na}=1 \wedge Fin([[\alpha\neq 1]]).
\end{equation}
Recall that $\Phi_{val}(x)$ is the formula from the language of rings that expresses that $v(x)\geq 0$ (cf. Section \ref{fin-supp}). 
The first conjunct on the right hand side expresses that $\alpha \in \hat{\Z}^*$. The second and third conjuncts express 
that $\alpha(p)$ is non-zero for all $p$, and that $\alpha(p)$ is $1$ for all but finitely many $p$, respectively.

Indeed, suppose $x\in T$. Then $x=\theta r$, where $\theta \in \hat{\Z}^*$ and $r\in \Q^*$. Then 
$$x(p)=\theta(p)r=\theta(p)u(p)p^{v_p(r)}=u'(p)p^{v_p(r)},$$
where $u'(p)$ is a unit in $\Z_p$ and $v_p$ denotes the $p$-adic valuation. 
We can write this as $\eta e$, where $\eta$ is in $\hat{\Z}^*$ and $e$ is an adele that is non-zero at all places $e(p)$, and $1$ at almost all places $e(p)$ (outside the finite set of prime divisors $p$ of $r$). 

Conversely, consider an element $x:=\theta \alpha$, where $\theta \in \hat{\Z}^*$ and $\alpha$ is an adele such that 
$\alpha(p)=1$ for all $p$ outside a finite set $E$ of primes, and $\alpha(p)$ is non-zero for all $p$. Define 
$r=\prod_{p\in E} p^{v_p(\alpha(p))}$. Then for all $p\in E$ we have that $r^{-1}x(p)=r^{-1}\theta(p)\alpha(p)$ is a 
unit in $\Z_p$ since $r^{-1}\alpha(p)$ is a unit in $\Z_p$ for all $p\in E$. For all $p\notin E$ we have that $r^{-1}x(p)=r^{-1}\theta(p)\alpha(p)=r^{-1}\theta(p)$ is a unit in $\Z_p$ since $r$ is a unit in $\Z_p$ for all $p\notin E$. So $r^{-1}x$ is in $\hat{\Z}^*$, and $x=r(r^{-1})x$ has the required form.
\end{proof}

\section{Uniformity in the number field $K$}

It is natural ask if the adelic quantifier elimination given in Theorem \ref{th-qe} is independent of the number field $K$, or the weaker question whether the theory of $\A_K$ for all $K$ is decidable. The first question states whether given an $L$-formula $\psi(\bar x)$ there exists a Boolean $\cL$-formula $\theta$ and $L$-formulae $\psi_1(\bar x),\dots,\psi(\bar x)$ that do not depend on $K$ and are quantifier free in a decidable language, such that for any $a_1,\dots,a_n\in \A_K^{fin}$
$$\Bbb A_K^{fin}\models \Psi(a_1,\dots,a_n) \Leftrightarrow$$
$$\B_{K}\models \theta([[\psi_1(a_1,\dots,a_n)]],\dots,[[\psi_m(a_1,\dots,a_n)]])).$$
Inspection of the proof of Theorem \ref{th-qe} and the works \cite{DM-supp} and \cite{FV} show that the construction of the formulae $\theta$ and $\psi_1,\dots,\psi_m$ do not depend on $K$ and are uniform for various families of $L$-structures and Boolean algebras. Thus if one has a quantifier elimination that was true uniformly for all finite extensions of $\Q_p$ for all $p$, then we would deduce that Theorem \ref{th-qe} holds uniformly for all $K$. 

The existence of such a uniform quantifier elimination in fact reduces to the case of finite extensions of $\Q_p$ for a 
single prime.
\begin{thm}\label{th-dec-ad} The theory of all sentences that hold in the adele rings $\A_K$ for all number fields $K$ 
is decidable if and only if for each fixed prime $p$, the theory of all finite extensions of $\Bbb Q_p$ is decidable. The adelic quantifier elimination in Theorem \ref{th-qe} does not depend on the number field $K$ if and only if the family of all finite extensions of $\Q_p$ have uniform quantifier elimination in $L$, for any given prime $p$ (where $L$ is as in Theorem \ref{th-qe}).\end{thm}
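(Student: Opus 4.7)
The strategy is to isolate the $K$-dependence of the adelic theory into two components: the enriched Boolean algebra $\B_K$, and the family of completions $\{K_v : v\in V_K^f\}$. As $K$ varies over all number fields, the latter family ranges precisely over all finite extensions of all $\Q_p$, since by Krasner's lemma every finite extension of $\Q_p$ arises as some $K_v$. The starting point is the observation that the Feferman--Vaught reduction underlying Theorem \ref{th-qe} (via Theorem \ref{restricted-qe}) is effective and does not inspect $K$: from an adelic formula $\psi$ one computes the Boolean formula $\theta$ and the local formulas $\psi_1,\dots,\psi_m$ using $\psi$ alone.

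For the decidability direction $(\Leftarrow)$, given an adelic sentence $\psi$ I would produce $\theta,\psi_1,\dots,\psi_m$ as above. By Theorem \ref{bool1} one may take $\theta$ quantifier-free in the Boolean language, hence a Boolean combination of predicates $C_j(\beta)$ and $Fin(\beta)$ applied to Boolean terms $\beta$ in the $[[\psi_i]]$. Deciding whether $\psi$ holds in every $\A_K$ then reduces to deciding, uniformly in $K$, cardinality and finiteness conditions on sets $\{v\in V_K^f : K_v \models \eta\}$ for Boolean combinations $\eta$ of the $\psi_i$. By Ax--Kochen--Ershov one computes $p_0$ depending on the $\psi_i$ such that for $p\geq p_0$ and any finite extension $K_v/\Q_p$, the truth of $\eta$ in $K_v$ becomes a sentence about the residue field $k_v$ alone (the value group being always $\Z$). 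Ax's theorem \cite{ax} handles the resulting sets of places via Frobenius density, while the finitely many primes $p<p_0$ are treated by the assumed decidability of the theory $T_p$ of all finite extensions of $\Q_p$.

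For the direction $(\Rightarrow)$, given a prime $p$ and an $L$-sentence $\phi$, I would encode the statement ``every finite extension of $\Q_p$ satisfies $\phi$'' as a sentence lying in the common adelic theory: quantify over minimal idempotents $e$ of residue characteristic $p$ in $\A_K$ (using the uniform $\cL_{\rm{rings}}$-definition of valuation rings from Theorem \ref{CDLM-th} and the residue-characteristic separation discussed in Section \ref{sec-id}) and assert $e\A_K \models \phi$ for all such $e$, for every $K$. Since every finite extension of $\Q_p$ is some $K_v$, decidability of the common adelic theory yields decidability of $T_p$.

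The quantifier-elimination equivalence is proved by the same two-way reduction. If for every $p$ the finite extensions of $\Q_p$ admit uniform QE in $L$, then the local formulas $\psi_i$ produced by the uniform Feferman--Vaught procedure in Theorem \ref{th-qe} can be chosen quantifier-free in the relevant sort in a way independent of $K$, giving a $K$-independent adelic QE. Conversely, specialising adelic QE to $e\A_K\cong K_{v_e}$ via minimal idempotents extracts uniform QE of the local family. I expect the main obstacle to be the counting and finiteness step in the decidability $(\Leftarrow)$ direction: effectively deciding cardinalities of the place-sets $\{v\in V_K^f : K_v \models \eta\}$ uniformly in $K$ requires a careful combination of Ax--Kochen--Ershov (to reduce to a residue-field condition for all but finitely many $p$) with effective Frobenius density in the sense of Ax, together with bookkeeping so that the uniform choice of $p_0$ does not depend on $K$.
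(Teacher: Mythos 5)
Your proposal is correct and follows the same broad strategy the paper sketches for the $(\Leftarrow)$ direction: the effective Feferman--Vaught reduction from Theorem \ref{th-qe} is $K$-independent, and for large residue characteristic Ax--Kochen--Ershov (or Basarab--Kuhlmann) reduces truth in $K_v$ to a residue-field sentence, which Ax's theory of pseudo-finite fields makes decidable; the finitely many small primes are then handled by the assumed decidability of each $T_p$. Where you go further is in making explicit the $(\Rightarrow)$ direction and the quantifier-elimination clause, neither of which the paper spells out in the proof text. Your encoding of ``every finite extension of $\Q_p$ satisfies $\phi$'' as an adelic sentence quantifying over minimal idempotents $e$ with $e\A_K$ of residue characteristic $p$ --- using the uniform definability of the valuation ring from \cite{CDLM} and the observation that residue characteristic $p$ can be detected via $v(p)>0$ once the valuation ring is definable (see Section \ref{sec-id}) --- together with the Krasner-lemma observation that every finite extension of $\Q_p$ actually arises as some $K_v$, is exactly the right way to close the converse, and supplies a step the paper leaves implicit. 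The one point worth flagging, which you yourself identify: when the Boolean shell $\theta$ involves cardinality predicates $C_j$, deciding them uniformly in $K$ requires combining the places of residue characteristic $<p_0$ (finitely many, handled by the $T_p$ oracles and the finitely many splitting behaviours over those primes) with the places of residue characteristic $\geq p_0$ (handled by Ax's effective density results). Neither the paper's sketch nor your proposal works this bookkeeping out in full detail, so in this respect the two proofs are at the same level of rigour.
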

Indeed, let $\Phi$ be a sentence of the language of rings. Then it is possible to effectively find 
some prime $p_0$ depending only on 
$\Phi$ such that for any number field $K$ and any completion $K_v$ with residue field $k_v$, where $v$ is a 
non-archimedean valuation, if the characteristic of $k_v$ is at least $p_0$ then the truth of $\Phi$ in 
$K_v$ is decidable. This follows from a result of Ax-Kochen-Ershov but can also be deduced from the residue characteristic zero case of the Basarab-Kuhlmann quantifier elimination in \cite{basarab} or \cite{kuhlmann} for a valued field relative to residue rings (in this case all the higher residue rings are equal to the residue field). The value group is decidable and has quantifier elimination in a suitable language, so one only needs to decide residue field statements, and this can be done by Ax's work \cite{ax} (and moreover a suitable uniform quantifier elimination for the residue fields exists by Kiefe \cite{kiefe}).

The main obstacle to proving the uniform quantifier elimination stated in Theorem \ref{th-dec-ad} is the problem of 
quantifier elimination or decidability for the class of ramified extensions of $\Q_p$ of unbounded ramification index, or for infinitely ramified Henselian valued fields.

\bibliographystyle{acm}
\bibliography{bibadeles}

\begin{thebibliography}{10}

\bibitem{ax}
{\sc Ax, J.}
\newblock The elementary theory of finite fields.
\newblock {\em Ann. of Math. (2) 88\/} (1968), 239--271.

\bibitem{basarab}
{\sc Basarab, {\c{S}}.~A.}
\newblock Relative elimination of quantifiers for {H}enselian valued fields.
\newblock {\em Ann. Pure Appl. Logic 53}, 1 (1991), 51--74.

\bibitem{Belair}
{\sc B{\'e}lair, L.}
\newblock Substructures and uniform elimination for {$p$}-adic fields.
\newblock {\em Ann. Pure Appl. Logic 39}, 1 (1988), 1--17.

\bibitem{Cassels}
{\sc Cassels, J. W.~S.}
\newblock Global fields.
\newblock In {\em Algebraic {N}umber {T}heory ({P}roc. {I}nstructional {C}onf.,
  {B}righton, 1965)}. Thompson, Washington, D.C., 1967, pp.~42--84.

\bibitem{CF}
{\sc Cassels, J. W.~S., and Fr{\"o}hlich, A.}, Eds.
\newblock {\em Algebraic number theory\/} (London, 1986), Academic Press Inc.
  [Harcourt Brace Jovanovich Publishers].
\newblock Reprint of the 1967 original.

\bibitem{CDLM}
{\sc Cluckers, R., Derakhshan, J., Leenknegt, E., and Macintyre, A.}
\newblock Uniformly defining valuation rings in {H}enselian valued fields with
  finite or pseudo-finite residue fields.
\newblock {\em Ann. Pure Appl. Logic 164}, 12 (2013), 1236--1246.

\bibitem{CC}
{\sc Connes, A., and Consani, C.}
\newblock The hyperring of ad\`ele classes.
\newblock {\em J. Number Theory 131}, 2 (2011), 159--194.

\bibitem{CC2}
{\sc Connes, A, C.~C.}
\newblock Geometry of the arithmetic site.
\newblock arXiv:math/1502.05580v1.

\bibitem{DM-ad2}
{\sc Derakhshan, J., and Macintyre, A.}
\newblock Model theory of adeles {I}{I}.
\newblock {\em In preperation.\/}.

\bibitem{DM-supp}
{\sc Derakhshan, J., and Macintyre, A.}
\newblock Some supplements to {F}eferman-{V}aught related to the model theory
  of adeles.
\newblock {\em Ann. Pure Appl. Logic 165}, 11 (2014), 1639--1679.

\bibitem{DM-bool}
{\sc Derakhshan, J., and Macintyre, A.}
\newblock Enrichments of {B}oolean algebras by {P}resburger predicates.
\newblock {\em Fundamenta Mathematicae\/} (2016).

\bibitem{DM}
{\sc Dougherty, R., and Miller, C.}
\newblock Definable {B}oolean combinations of open sets are {B}oolean
  combinations of open definable sets.
\newblock {\em Illinois J. Math. 45}, 4 (2001), 1347--1350.

\bibitem{feferman}
{\sc Feferman, S.}
\newblock Lectures on proof theory.
\newblock In {\em Proceedings of the {S}ummer {S}chool in {L}ogic ({L}eeds,
  1967)\/} (Berlin, 1968), Springer, pp.~1--107.

\bibitem{FV}
{\sc Feferman, S., and Vaught, R.~L.}
\newblock The first order properties of products of algebraic systems.
\newblock {\em Fund. Math. 47\/} (1959), 57--103.

\bibitem{HMR}
{\sc Hrushovski, E, M. B. R.~S.}
\newblock Definable equivalence relations and zeta functions of groups.
\newblock arXiv:math/0701011v4.

\bibitem{kiefe}
{\sc Kiefe, C.}
\newblock Sets definable over finite fields: their zeta-functions.
\newblock {\em Trans. Amer. Math. Soc. 223\/} (1976), 45--59.

\bibitem{koenigsmann}
{\sc Koenigsman, J.}
\newblock Defining {Z} in {Q}.
\newblock Annals of Mathematics, To appear.

\bibitem{KK}
{\sc Kreisel, G., and Krivine, J.-L.}
\newblock {\em Elements of mathematical logic. {M}odel theory}.
\newblock Studies in Logic and the Foundations of Mathematics. North-Holland
  Publishing Co., Amsterdam, 1967.

\bibitem{kuhlmann}
{\sc Kuhlmann, F.-V.}
\newblock Quantifier elimination for {H}enselian fields relative to additive
  and multiplicative congruences.
\newblock {\em Israel J. Math. 85}, 1-3 (1994), 277--306.

\bibitem{MR}
{\sc Makkai, M., and Reyes, G.~E.}
\newblock {\em First order categorical logic}.
\newblock Lecture Notes in Mathematics, Vol. 611. Springer-Verlag, Berlin,
  1977.
\newblock Model-theoretical methods in the theory of topoi and related
  categories.

\bibitem{pas}
{\sc Pas, J.}
\newblock Uniform {$p$}-adic cell decomposition and local zeta functions.
\newblock {\em J. Reine Angew. Math. 399\/} (1989), 137--172.

\bibitem{pillay-book}
{\sc Pillay, A.}
\newblock {\em Geometric stability theory}, vol.~32 of {\em Oxford Logic
  Guides}.
\newblock The Clarendon Press Oxford University Press, New York, 1996.
\newblock Oxford Science Publications.

\bibitem{weisp-hab}
{\sc Weispfenning, V.}
\newblock Model theory of lattice products.
\newblock {\em Habilitation, Universitat Heidelberg\/} (1978).

\bibitem{Weisp2}
{\sc Weispfenning, V.}
\newblock Quantifier elimination and decision procedures for valued fields.
\newblock In {\em Models and sets ({A}achen, 1983)}, vol.~1103 of {\em Lecture
  Notes in Math.} Springer, Berlin, 1984, pp.~419--472.

\end{thebibliography}

\end{document}